\begin{document}
\parskip=6pt
\newtheorem{prop}{Proposition}
\numberwithin{prop}{section}
\newtheorem{theorem}{Theorem}
\newtheorem{corr}{Corollary}
\newtheorem{lemma}{Lemma}
\newtheorem{defn}{Definition}
\numberwithin{defn}{section}
\numberwithin{lemma}{section}
\numberwithin{equation}{section}
\newtheorem{proof2}{Proof of Theorem 1.2}
\newtheorem{proof3}{Proof of Theorem 1.3}
\newtheorem{proof4}{Proof of Theorem 1.4}

\newcommand{\sD}{{\cal D}}
\newcommand{\sC}{{\cal C}}
\newcommand{\sP}{{\Cal P}}
\newcommand{\sF}{{\cal F}}
\newcommand{\sG}{{\Cal G}}
\newcommand{\sL}{{\cal L}}
\newcommand{\sH}{{\cal H}}
\newcommand{\sR}{{\Cal R}}
\newcommand{\sS}{{\cal S}}
\newcommand{\sA}{{\cal A}}
\newcommand{\sE}{{\cal E}}
\newcommand{\sQ}{{\Cal Q}}
\newcommand{\sM}{{\cal M}}
\newcommand{\sB}{{\cal B}}
\newcommand{\sK}{{\cal K}}
\newcommand{\loc}{\text{loc}}
\newcommand{\sN}{{\eta}}
\newcommand{\bb}{\mathbf b}
\newcommand{\bo}{\mathbf o}
\newcommand{\bx}{\mathbf x}
\newcommand{\bp}{\mathbf p}
\newcommand{\bz}{\mathbf z}
\newcommand{\bs}{\mathbf s}
\newcommand{\bl}{\mathbf l }
\newcommand{\by}{\mathbf y}
\newcommand{\bO}{\mathbf O}
\newcommand{\oy}{\overline{y}}
\newcommand{\ox}{\overline{\mathbf x}}
\newcommand{\bR}{\mathbb{R}}
\newcommand{\bS}{\mathbb{S}}
\newcommand{\bC}{\mathbb{C}}
\newcommand{\ds}{\displaystyle}
\newcommand{\var}{\varepsilon}


\title{Regularity and the Behavior of Eigenvalues for Minimizers of a Constrained $Q$-tensor Energy for Liquid Crystals}
\author{
Patricia Bauman\thanks{Research supported by NSF grant DMS-1412840}\\
Daniel Phillips\thanks{Research supported by NSF grant DMS-1412840}\\
Department of Mathematics\\
Purdue University\\
West Lafayette, IN\ 47907\\
bauman@math.purdue.edu\\
phillips@math.purdue.edu}

\maketitle

\begin{abstract}
We investigate minimizers defined on a bounded domain in $\bR^2$ for the Maier--Saupe Q--tensor energy used to characterize nematic liquid crystal configurations.  The energy density  is singular, as in  Ball and Mujamdar's modification of the Ginzburg--Landau Q--tensor model, so as to constrain the competing states to have eigenvalues in the closure of a physically realistic range.
We prove that minimizers are regular and in several model problems we are able to use this regularity to prove that minimizers have eigenvalues strictly within the physical range.
\end{abstract}
AMS subject classification. 35J50
\section{Introduction}

In this paper we prove regularity properties and  bounds on the eigenvalues for local minimizers of an energy derived from Maier--Saupe theory, a model that is used to describe order in
nematic liquid crystal materials. (See \cite{MS}.)
We examine the special case of  a liquid crystal occupying a cylindrical region in $\bR^3$ with  cross--section $\Omega\subset\bR^2$, where $\Omega$ is an
open bounded domain with a smooth $(C^2)$ boundary.
The liquid crystal material is described at almost every $(x_1,x_2)$ in $\Omega$ by a $3\times 3$ matrix $Q(x_1,x_2)$ and is assumed to be uniform in the $x_3$--direction.
The function $Q$ is a tensor--valued order parameter, by which we mean that for almost every $\bx=(x_1,x_2)$ in $\Omega$,
$$
Q(\bx)\in\sS_0:=\{Q\in\bR^{3\times 3}\colon Q=Q^t\text{ and }tr\ Q=0\}.
$$
where $\bR^{3\times 3}$ denotes the space of $3\times3$ real--valued matrices.
The energy considered here is:

\begin{equation}
{\sF}(Q)=\int_\Omega [f_e (Q(\bx),\nabla Q(\bx))+f_b (Q(\bx))]d\bx.
\end{equation}
To describe our assumptions on the energy density  of $\sF$, let $Q\in\sS_0$ and let $\lambda_1(Q)\leq\lambda_2(Q)\leq\lambda_3(Q)$ denote its eigenvalues.
Define the set $\sM\subset\sS_0$ as the set of matrices $Q\in\sS_0$ for which $\lambda_i(Q)\in (-{1\over 3},{2\over 3} )$ for all $1\leq i\leq 3$.
Then $\sM$ is an open, bounded, and convex subset of $\sS_0$. Note that $\partial\sM$ is the set of all $Q$ in $\sS_0$ such that $\lambda_i(Q)\in [-{1\over 3},{2\over 3} ]$ for all $1\leq i\leq 3$ and $\lambda_j(Q)\in\{-{1\over 3},{2\over 3}\}$ for at least one $j$.
We assume throughout the paper that $f_b$ satisfies
\begin{equation}
f_b(Q) = \left\{
    \begin{array}{l l}f(Q)-\kappa |Q|^2+b_0\quad\text{ for }Q\in\sM,\\
             +\infty\hskip .9in \text{ for }Q\in\sS_0\backslash\sM.\\
    \end{array} \right.
 \end{equation}
where $\kappa$ and $b_0$ are constants, $\kappa\geq 0$, $f$ is convex, $f\in C^\infty(\sM)$, and
$\underset{ Q\to\partial\sM
} {\lim} f(Q)=+\infty$.
Such a function is bounded below and hence we assume without loss of generality that
$$
b_0=-\underset{Q\in \sM}{\min} \{f(Q)-\kappa |Q|^2\}
$$
so that $f_b\geq0$.  Our assumptions on the bulk energy density $f_b$ are motivated by the work in the papers \cite{BM1} by Ball and Mujamdar and \cite{KK} by Katriel, Kventsel, Luckhurst, and Sluckin. In these papers  a particular potential function  $f(Q)=f_{ms}(Q)$ as described
above was constructed
consistent with Maier--Saupe theory.
More precisely, it was proved in \cite{BM1} and  \cite{KK} that for each $Q\in\sM$ there is a unique function $\hat{\rho}=\hat{\rho}(Q)$ in the set

\[ A_Q:=\{\rho\in L^1 (\bS^2;\bR): \rho\geq 0, \ds\int_{\bS^2}\rho(\bp) \ d\bp=1,
Q=\int_{\bS^2} (\bp\otimes\bp-{1\over 3}\ I)\rho (\bp) d\bp\}\]

satisfying
\begin{equation}
f_{ms}(Q):=\underset{\rho\in A_Q}{\inf}\int_{\bS^2} \rho (\bp)\ln (\rho (\bp)) d\bp=\int_{\bS^2}\hat{\rho}(\bp)\ln (\hat{\rho} (\bp)) d\bp.
\end{equation}
Given $Q\in\sM$ the set $A_Q$ is the family of  (absolutely continuous)  probability densities $\rho$ with $Q$ as their normalized  second moments. The set $A_Q$ is nonempty for $Q$ in $\sS_0$ if and only if $Q\in\sM$; if $Q\in\partial\sM$ one can still represent it as the second moment of a density,
 \[Q=\int_{\bS^2} (\bp\otimes\bp-{1\over 3}\ I) d\mu(\bp),\]
 however in this case the density $\mu$ will be a singular measure and this is not considered  physical. (See \cite{BM1}.)  The densities in $A_Q$ then provide all possible physically admissible statistics for the local orientation of the liquid crystal molecules  with  second moments given by $Q$. It is proved in  \cite{BM1} and \cite{KK} that $f_{ms}(Q)$ has the properties postulated above for $f(Q)$ except for the fact that $f_{ms}\in C^\infty(\sM)$. This fact is proved in \cite{F}.

We next describe our assumptions on the elastic energy density $f_e$.
Let
\begin{eqnarray*}
\sD&:=&\{D=[D_{ijk}]\ 1\leq i,j,k\leq 3:\\
D_{ijk}&=& D_{jik}\text{ and }\sum^3_{\ell=1} D_{\ell\ell k}=0\text{ for each } i,j,\text{ and }k\}.
\end{eqnarray*}
Note  that $\sD$ includes the tangent space for differentiable mappings $Q:\Sigma\rightarrow\sS_0$ where $\Sigma$ is an open subset of $\bR^3$, i.e., $[\partial_{x_k} Q_{ij}(\bx)]\in\sD$  for each differentiable $\sS_0$--valued $Q$ and each $\bx$ in $\Sigma$.
We assume that $f_e(Q,D)$ is continuous on $\overline{\sM}\times\sD$ and that there are constants $0<\alpha_1\leq\alpha_2<\infty$, $0\leq M_1\leq M_2<\infty$ so that
\begin{equation}
\alpha_1 |D|^2-M_1\leq f_e (Q,D)\leq \alpha_2 |D|^2 +M_2
\end{equation}
for all $(Q,D)\in\overline{\sM}\times\sD$.
 A specific class of elastic energy density functions that we have in mind are the {\it Landau--de Gennes elastic energy densities} $f_{ld}(Q,D)$ defined as polynomials in terms of the components of $Q$ and $D$ that are
$SO(3)$ invariant, that is, $f_{ld}(Q,D)=f_{ld}(Q^*,D^*)$ for every $(Q,D)$ in $\sS_0\times\sD$ and $R$ in $SO(3)$ where
\begin{eqnarray*}
&Q^*_{ij}=R_{i\ell}Q_{\ell m}R_{jm},\\
&  D^*_{ijk}=R_{i\ell}R_{jm}R_{kh}D_{\ell mh}.
\end{eqnarray*}
Here we used the summation convention for repeated indices among $\ell,m,h$ in the set $\{1,2,3\}$.
We remark that for the case of a Landau--de Gennes elastic energy density  which is a function of $D$ only, denoted by $f_{ld}(Q,D)=f_{ld}^{(1)}(D)$, it is known that  $f_{ld}^{(1)}$ satisfies (1.4) if and only if for any differentiable function $Q(x_1,x_2,x_3)$ valued in $\sS_0$ we have
\begin{eqnarray*}
f_{ld}^{(1)}(\nabla Q)&=&L_1 Q_{ij,x_\ell}Q_{ij,x_\ell}+L_2 Q_{ij,x_j}Q_{ik,x_k}\\
&+&L_3 Q_{ij,x_k}Q_{ik,x_j}
\end{eqnarray*}
with the elasticity constants satisfying

\[L_1+\frac{5}{3}L_2+\frac{1}{6}L_3>0,\,
L_1-\frac{1}{2}L_3>0,\,
L_1+L_3>0.\]
 (See \cite{LMT}.) This is a classic example among Landau--de Gennes models.
More general examples of Landau de Gennes energy densities are also given in \cite{LMT} such as
\begin{eqnarray*}
f_{ld}^{(2)}(Q,\nabla Q)&=&f_{ld}^{(1)}(\nabla Q)+L_4\varepsilon_{lkj} Q_{li}Q_{ki,x_j}\\
&+& L_5 Q_{lk} Q_{ij,x_l} Q_{ij,x_k}
\end{eqnarray*}
where $\varepsilon_{\ell kj}$ is the Levi--Civita tensor.
The $L_4$ term above allows the model to account for molecular chirality, and cubic expressions such as the $L_5$ term permit a more specific description of elastic
contributions.
Since $\overline\sM$ is a bounded set one can easily give conditions on the elasticity constants so that $f_{ld}^{(2)}$ satisfies (1.4).
(See (1.10).)
Note that in this paper we apply the energy density $f_{ld}(Q,D)$ to functions $Q$ in $H^1_{loc}(\Omega, \overline\sM)$ where $\Omega$ is a subset of the $ x_1 x_2$ plane, and as such for the functions considered here we have $D_{ij3}=Q_{ij,x_3}=0.$

We consider fixed boundary conditions,
\begin{equation}
Q=Q_0\in C^{0,1} (\partial\Omega;\overline\sM) \text{ such that } \int_{\partial\Omega}f_b(Q_0) ds<\infty.
\end{equation}
Set
$$
\sA_0=\{Q\in H^1 (\Omega;\overline\sM)\colon Q=Q_0\text{ on }\partial\Omega\}.
$$
One can show that there exists $Q\in\sA_0$ such that $\mathcal{F}(Q)<\infty$.

In this paper, under the assumptions described above on $\Omega, f_e, f_b$, and $Q_0$, we analyze finite energy local minimizers of (1.1) and global minimizers in $\sA_0$.
Note that a Landau--de Gennes elastic energy density $f_e=f_{ld}(Q,D)$ satisfies (1.4) if and only if it is of degree two in $D$ for each $Q$ and the mapping $D\in\sD\to f_{ld}(Q,D)$ is strictly
convex, uniformly in $Q$ for $Q\in\sM$.
In light of this, if $f_e=f_{ld}$ the existence of global minimizers in $\sA_0$ follows in a standard way by using direct methods, just as in \cite{BM1}.

Our main results can be described as follows. In Section 2 we first prove that local minimizers of $\mathcal{F}$ in $H^1(\Omega;\overline\sM)$ are uniformly H\"older continuous in subdomains whose closure is a compact subset of $\Omega$.
This follows immediately from:
\begin{theorem} Assume $r_0>0$, $0<r<r_0$, and $B_{4r}(\bo)$ is an open disk contained in $\Omega$ with center $\bo$ and radius $4r$. Let $B_s=B_s(\bo)$. Assume $Q\in H^1 (B_{4r};\overline\sM)$ such that
$$
\mathcal{F}(Q;B_{4r})\colon =\int_{B_{4r}} [f_e (Q,\nabla Q)+f_b (Q)]d\bx<\infty
$$
and
$$
\mathcal{F}(Q;B_{4r})\leq \mathcal{F}(V;B_{4r})
$$
for all $V\in H^1 (B_{4r};\overline\sM)$ such that $V-Q\in H_0^1 (B_{4r};\sS_0)$.
Then there exists a constant $\sigma$ in $(0,1)$ and $c>0$  depending only on  $r_0$ and on the constants in (1.2) and (1.4) such that $Q\in C^\sigma(\overline B_r)$ and
$$
\|Q\|_{C^\sigma(\overline B_r)}\leq \frac{c}{r^\sigma}\sqrt{(\mathcal{F}(Q; B_{2r})+1)}.
$$
\end{theorem}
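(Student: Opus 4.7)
The plan is to prove a Morrey-type energy growth estimate for $Q$ and then invoke the classical Morrey--Campanato characterization of H\"older continuity in two dimensions. Concretely, I want to show that there exist $\sigma\in(0,1)$ and $c>0$ (both depending only on $r_0$ and the constants in (1.2) and (1.4)) such that
$$\int_{B_\rho(\bx_0)}|\nabla Q|^2\,d\bx \;\le\; c\left(\frac{\rho}{r}\right)^{2\sigma}\bigl(\sF(Q;B_{2r})+1\bigr)$$
for every $\bx_0\in\overline B_r$ and every $0<\rho\le r$. Since $Q\in\overline\sM$ gives an automatic $L^\infty$ bound, Morrey's Dirichlet growth theorem then converts this growth into the asserted H\"older norm bound.

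The heart of the matter is a hole-filling inequality obtained by a comparison. For any $B_\rho(\bx_0)\subset B_{4r}$, pick a radial cutoff $\eta$ with $\eta\equiv1$ on $B_{\rho/2}(\bx_0)$, $\eta\equiv0$ off $B_\rho(\bx_0)$, $|\nabla\eta|\le C/\rho$, and let $\bar Q := |A|^{-1}\int_A Q\,d\bx$ denote the average of $Q$ on the annulus $A = B_\rho(\bx_0)\setminus B_{\rho/2}(\bx_0)$. The competitor
$$V := Q - \eta\,(Q-\bar Q)$$
is pointwise a convex combination of $Q$ and $\bar Q$, both of which lie in the convex set $\overline\sM$, so $V\in\overline\sM$ a.e.; moreover, because $\sF(Q;B_{4r})<\infty$ forces $Q\in\sM$ a.e.\ and because $\mathrm{dist}(\,\cdot\,,\partial\sM)$ is concave on $\overline\sM$, Jensen's inequality places $\bar Q$ in the open set $\sM$, so $f_b(\bar Q)<\infty$. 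Hence $V=Q$ outside $B_\rho(\bx_0)$, $V=\bar Q$ on $B_{\rho/2}(\bx_0)$, and $V$ is an admissible competitor on $B_{4r}$. The product rule and Poincar\'e's inequality on $A$ give $\int_{B_\rho(\bx_0)}|\nabla V|^2\le C\int_A|\nabla Q|^2$, while convexity of the smooth part $f$ of $f_b$, combined with the Jensen estimate $f_b(\bar Q)\le C\rho^{-2}\int_A f_b(Q)+C$ and boundedness of $|V|$, yields $\int_{B_\rho(\bx_0)}f_b(V)\le C\int_A f_b(Q)+C\rho^2$. Feeding these into $\sF(Q;B_\rho(\bx_0))\le \sF(V;B_\rho(\bx_0))$ and using (1.4) to sandwich $f_e$, the energy $E(\rho):=\int_{B_\rho(\bx_0)}(|\nabla Q|^2+f_b(Q))\,d\bx$ satisfies the hole-filling bound
$$E(\rho/2)\;\le\;\theta\,E(\rho)+C\rho^2,\qquad\theta\in(0,1).$$
Iterating this by the standard Campanato--Giaquinta lemma produces the Morrey growth estimate above, and Morrey's theorem finishes the proof.

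The principal difficulty is verifying that the comparison map $V$ has $f_b(V)\in L^1$ with a quantitative bound that transfers back to $f_b(Q)$. This is what forces the detour through the averaged value $\bar Q$ and the double use of Jensen's inequality---once on $f_b$, and once on the concave distance to $\partial\sM$ to certify $\bar Q\in\sM$. Without the convexity of $\overline\sM$ (to keep $V$ inside the target) and the convexity of $f$ (to bound $f_b(V)$ pointwise by $f_b(Q)$ up to a controlled additive term), the hole-filling error would be an unscaled additive constant instead of $C\rho^2$, and iteration would deliver only boundedness of $E(\rho)$ rather than the power decay required for H\"older continuity.
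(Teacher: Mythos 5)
Your argument is correct, and it is a genuinely different construction from the paper's. The paper builds the competitor by \emph{harmonic replacement}: it replaces $Q$ on a well-chosen ball $B_s(\bx_0)$ (with $\rho/2\le s\le\rho$ selected by a Fubini argument so that $f_b(Q)\in L^1(\partial B_s)$) by the harmonic extension $H$ of the boundary trace, shows $H\in\overline\sM$ via the strong maximum principle applied to $\zeta^tH\zeta$ (Lemma 2.1), controls $\int_{B_s}|\nabla H|^2$ through $H^{1/2}$ trace estimates on the annulus, and exploits subharmonicity of $f(H)$ plus the mean-value property to get $\int_{B_s}f(H)\le\frac{s}{2}\int_{\partial B_s}f(Q)$, with a $\tau H$, $\tau\uparrow 1$ approximation to dispense with the separation-from-$\partial\sM$ hypothesis. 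You instead take the cutoff interpolant $V=(1-\eta)Q+\eta\bar Q$ to the annular average: convexity of $\overline\sM$ keeps $V$ admissible, Poincar\'e on the annulus handles $\int|\nabla V|^2$, and convexity of $f$ is used twice---once pointwise, $f(V)\le(1-\eta)f(Q)+\eta f(\bar Q)$, and once via Jensen, $f(\bar Q)\le|A|^{-1}\int_A f(Q)$---to bound $\int_{B_\rho}f_b(V)$ by $C\int_A f_b(Q)+C\rho^2$; the concavity of $\mathrm{dist}(\cdot,\partial\sM)$ (or, just as directly, finiteness of the Jensen bound) certifies $\bar Q\in\sM$ so these quantities are well-defined. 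Both routes deliver the same hole-filling inequality $E(\rho/2)\le\theta E(\rho)+C\rho^2$ and hence the Morrey growth. Your route is more elementary---it avoids trace theory, the maximum principle, subharmonicity, the good-radius selection, and the $\tau\uparrow1$ limiting step---which is a real simplification for Theorem 1. What the paper's heavier machinery buys is reuse: the harmonic-replacement framework (Definition 2.1, Lemma 2.1, the barrier estimate near $\partial\Omega$) carries over with modest changes to the boundary regularity proof of Theorem 2 and, upgraded to the constant-coefficient $\sL$-replacement, to the sharpened decay rates needed in Theorem 3 to rule out $\Lambda(Q)$. Your cutoff construction, while cleaner here, does not directly yield the near-optimal $\frac{s}{2}\int_{\partial B_s}f(Q)\,d\bs$ boundary representation that the proof of Theorem 3 builds on.
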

We then consider local minimizers near $\partial\Omega$. Let $r_1$ be a positive constant such that for each point $\bo$ on $\partial\Omega$, $B_{r_1}(\bo)\cap\Omega$ is a coordinate neighborhood for the $C^2$ structure of $\partial\Omega$ and is diffeomorphic to a half-disk.

\begin{theorem} Assume  $\bo$ is any point in $\partial\Omega$ and let $B_s=B_{s}(\bo)$.
There exists a constant $\sigma$ in $(0,1)$ and $c>0$  depending only on $\Omega$, $r_1$, $Q_0$, and the constants in (1.2) and (1.4), so that if $0<r\leq r_1, \quad Q\in H^1 (\Omega\cap B_{4r};\overline\sM),\quad Q=Q_0\,\text{on}\, B_{4r}\cap\partial\Omega,\quad \mathcal{F}(Q;\Omega\cap B_{4r}) <\infty$,
and
$$
\mathcal{F}(Q;\Omega\cap B_{4r})\leq \mathcal{F}(V;\Omega\cap B_{4r})
$$
for all $V\in H^1 (\Omega\cap B_{4r};\overline\sM)$ such that $V-Q\in H_0^1 (\Omega\cap B_{4r};\sS_0)$, then $Q\in C^\sigma (\overline{\Omega\cap B_r})$ and
$$
\|Q\|_{C^\sigma(\overline{\Omega\cap B_r})}\leq\frac{c}{r^\sigma}\sqrt{(\mathcal{F}(Q; B_{2r})+1)}.
$$
\end{theorem}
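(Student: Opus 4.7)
The plan is to deduce Theorem~1.2 by flattening $\partial\Omega$ near $\bo$, extending the Dirichlet datum $Q_0$ into the interior as a Lipschitz $\overline{\sM}$-valued map, and then adapting the Morrey-type comparison argument that underlies Theorem~1.1 with boundary-adjusted test functions.

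Since $r\le r_1$ and $\partial\Omega\in C^2$, choose a bi-$C^2$ diffeomorphism $\Phi$ sending $\overline{\Omega\cap B_{4r}(\bo)}$ onto a half-disk-like domain $\overline E$ and $\partial\Omega\cap B_{4r}$ onto a straight segment of $\partial E$, with $C^2$ norms of $\Phi$ controlled by $\Omega$ and $r_1$. Under this pullback the transformed elastic density still satisfies bounds of the form (1.4) and the bulk density acquires only a bounded multiplicative Jacobian, while minimality is preserved; so it suffices to prove the Hölder bound on the half-disk for the pulled-back minimizer $\tilde Q = Q\circ\Phi^{-1}$. Next, extend the pulled-back boundary datum to a Lipschitz map $\tilde Q_0$ into $\overline{\sM}$ by Kirszbraun's theorem componentwise followed by nearest-point projection onto the closed convex set $\overline{\sM}$, then interpolate with a fixed strictly interior matrix (e.g.\ $0\in\sM$) away from the straight portion of $\partial E$ so that $f_b(\tilde Q_0)\in L^1$ of the half-disk, without inflating the Lipschitz constant.

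Then mimic the comparison argument that drives Theorem~1.1. For $0<\rho\le 2r$ and $\eta\in(0,1)$, build a competitor $V_\rho$ equal to $\tilde Q$ outside $B_\rho$, equal to $\tilde Q_0$ on $B_{(1-\eta)\rho}$, and equal to the linear interpolation $t\tilde Q+(1-t)\tilde Q_0$ on the annulus in between (with $t$ linear in the radial coordinate). Convexity of $\overline{\sM}$ makes $V_\rho$ admissible. The coercivity and growth in (1.4), together with the Lipschitz control on $\tilde Q_0$, give an upper bound on $\mathcal{F}(V_\rho;E\cap B_\rho)$ whose dominant contribution is of the form $\frac{C}{\eta\rho^2}\int_{\text{annulus}}|\tilde Q-\tilde Q_0|^2\,d\bx+C\rho^2$. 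Combining this with the minimality inequality $\mathcal{F}(\tilde Q;E\cap B_\rho)\le\mathcal{F}(V_\rho;E\cap B_\rho)$, optimizing over $\eta$, and applying a hole-filling iteration yields the Morrey-type decay
\[\int_{E\cap B_\rho}\bigl(|\nabla\tilde Q|^2+f_b(\tilde Q)\bigr)\,d\bx\le C\bigl(\rho/r\bigr)^{2\sigma}\bigl(\mathcal{F}(Q;\Omega\cap B_{2r})+1\bigr)\]
for some $\sigma\in(0,1)$ depending only on the structural constants. Campanato's $L^{2,2\sigma}$ characterization of $C^\sigma$ in the half-disk then converts this into the desired Hölder bound up to the flat boundary of $E$, which transfers to $Q$ on $\overline{\Omega\cap B_r}$ under $\Phi$ with the claimed prefactor $r^{-\sigma}\sqrt{\mathcal{F}(Q;B_{2r})+1}$.

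The main obstacle is controlling $\int f_b(V_\rho)$ on the interpolation annulus, since $f_b$ is singular on $\partial\sM$ and the convex combinations $t\tilde Q+(1-t)\tilde Q_0$ could approach this set. Convexity of $f_b$ on $\sM$ supplies the pointwise bound $f_b(V_\rho)\le tf_b(\tilde Q)+(1-t)f_b(\tilde Q_0)$ whenever both endpoints lie in $\sM$, and each term on the right is integrable by the energy hypothesis on $Q$ and by the interior-strict construction of $\tilde Q_0$. The degenerate case where $\tilde Q$ or $\tilde Q_0$ touches $\partial\sM$ on a set of positive measure is handled by a truncation argument analogous to the one required in the interior proof of Theorem~1.1, adding technical bookkeeping but no conceptually new ingredient.
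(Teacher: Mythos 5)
Your route to the boundary estimate is genuinely different from the paper's. The paper does not flatten $\partial\Omega$; it performs a harmonic replacement directly on $B_s(\bx_0)\cap\Omega$ and controls the resulting $\int f(H)$ by solving $\Delta w=1$ with $w=0$ on $\partial(B_s\cap\Omega)$, using the exterior-sphere condition of $\partial\Omega$ to build a barrier giving $|\partial_\nu w|\leq Cs$, and then applying Green's identity. The $O(\rho)$ error term then comes from $\int_{B_s\cap\partial\Omega}f_b(Q_0)\,d\bs\leq C$ via (1.5). Your proposal instead flattens, extends $Q_0$ to a Lipschitz $\overline{\sM}$-valued map $\tilde Q_0$, and compares with a cone (linear interpolation) competitor. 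This avoids barriers and Green's identity, at the cost of constructing and controlling the extension.

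Two places where your sketch needs repair to actually close the loop. First, the term you call dominant, $\eta^{-1}\rho^{-2}\int_{\text{annulus}}|\tilde Q-\tilde Q_0|^2$ (it should be $\eta^{-2}\rho^{-2}$, but more importantly), must be bounded by quantities supported only on the annulus $\{(1-\eta)\rho<|\bx|<\rho\}\cap E$ — not the full half-disk. If you invoke the Poincar\'e inequality on $B_\rho\cap E$ (using the vanishing of $\tilde Q-\tilde Q_0$ on $\partial E$), the resulting $C\eta^{-2}\int_{B_\rho\cap E}|\nabla \tilde Q|^2$ lands on the wrong side of the hole-filling inequality with a constant $>1$, and the iteration does not produce decay. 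The fix is a one-dimensional (angular) Poincar\'e inequality along each arc $\partial B_s\cap E$, valid because $\tilde Q-\tilde Q_0$ vanishes at the two endpoints on $\partial E$; integrating in $s$ then gives $\int_{\text{annulus}}|\tilde Q-\tilde Q_0|^2\lesssim\rho^2\int_{\text{annulus}}|\nabla(\tilde Q-\tilde Q_0)|^2$, supported only on the annulus. You never state this, and without it the hole-filling step is not justified. Second, the construction of $\tilde Q_0$ via Kirszbraun plus nearest-point projection onto $\overline\sM$, followed by interpolation toward $0\in\sM$ ``away from the straight portion,'' does not obviously deliver the bound you need, namely $\int_{B_\rho\cap E}f_b(\tilde Q_0)\,d\bx=O(\rho)$ uniformly in the center: nearest-point projection can park values on $\partial\sM$, and the generic Kirszbraun extension is unrelated to the one-sided $L^1$ datum (1.5). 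A direct construction such as $\tilde Q_0(\bx_1,\bx_2)=(1-c\bx_2)_+\,Q_0(\bx_1)$ (a convex combination with $0\in\sM$, constant in the normal direction near $\partial E$) immediately gives $f_b(\tilde Q_0(\bx))\le f_b(Q_0(\bx_1))+C$ by convexity of $f$, and hence $\int_{B_\rho\cap E}f_b(\tilde Q_0)\lesssim\rho\int_{\partial\Omega}f_b(Q_0)+\rho^2=O(\rho)$, matching the $O(\rho)$ error the paper obtains and producing the same (possibly small) H\"older exponent. Finally, you should say explicitly that the cone comparison is centered at boundary points and is to be combined with Theorem~1.1 at interior points (as the paper does via extending $Q$ to a neighborhood of $\overline\Omega$) to assemble the uniform H\"older bound on $\overline{\Omega\cap B_r}$.
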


In Maier--Saupe theory the set
\begin{eqnarray*}
\Lambda(Q)&=&\{\bx\in\overline{\Omega}\colon Q(\bx)\in\partial\sM\}\\
&\equiv&\{\bx\in\overline{\Omega}\colon \lambda_i(Q(\bx))\in[-\frac{1}{3},
\frac{2}{3}] \text{ for } 1\leq i \leq 3 \text{ and }\\
&&\lambda_j( Q(\bx))\in\{-\frac{1}{3},
\frac{2}{3}\} \text{ for some } j \in \{1,2,3\}\}
\end{eqnarray*}
corresponds to locations where perfect nematic order occurs and this is interpreted as  not physical.
(See \cite{BM1}.)
Note that by (1.2), if $\mathcal{F}(Q;E)<\infty$ then $\Lambda(Q)\cap E$ has measure zero.
Thus we have:

\begin{corr}Let $Q$ be a minimizer of $\sF$ in $\sA_0$.
Then $Q$ is H\"{o}lder continuous in $\overline\Omega$ and hence $\Lambda(Q)$ is compact.
\end{corr}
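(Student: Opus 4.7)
The plan is to reduce the corollary to Theorems 1.2 and 1.3 by a finite covering argument on the compact set $\overline\Omega$. First, I would verify that any global minimizer $Q$ of $\sF$ in $\sA_0$ satisfies the local minimality hypotheses of both theorems. Indeed, given a ball $B_{4r}(\bo)\subset\Omega$ and a competitor $V\in H^1(B_{4r};\overline\sM)$ with $V-Q\in H_0^1(B_{4r};\sS_0)$, extending $V$ by $Q$ on $\Omega\setminus B_{4r}$ produces an element of $\sA_0$; the global inequality $\sF(Q)\leq\sF(V)$ then collapses to $\sF(Q;B_{4r})\leq\sF(V;B_{4r})$, which is the hypothesis of Theorem 1.2. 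The same extension argument handles boundary balls $B_{4r}(\bo)\cap\Omega$ with $\bo\in\partial\Omega$ and $r\leq r_1$, since the zero trace condition $V-Q\in H_0^1(\Omega\cap B_{4r};\sS_0)$ guarantees the extension lies in $H^1(\Omega;\overline\sM)$ with the correct boundary value $Q_0$ on $\partial\Omega$.

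Next, I would fix an $r$ with $0<r\leq r_1/4$ and small enough that for every $\bo\in\Omega$ with $\text{dist}(\bo,\partial\Omega)\geq 4r$ we have $B_{4r}(\bo)\subset\Omega$. By compactness of $\overline\Omega$, finitely many balls of two types cover it: interior balls $B_r(\bo_i)$ with $B_{4r}(\bo_i)\subset\Omega$, and boundary balls $B_r(\bo_j)\cap\overline\Omega$ with $\bo_j\in\partial\Omega$. Applying Theorem 1.2 on each interior ball and Theorem 1.3 on each boundary ball yields, for each element of the cover, a Hölder exponent $\sigma_k\in(0,1)$ and constant depending only on $\Omega$, $r_1$, $Q_0$, and the structural constants in (1.2), (1.4); the Hölder bound is controlled by $\sqrt{\sF(Q;B_{2r})+1}\leq\sqrt{\sF(Q)+1}<\infty$. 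Setting $\sigma=\min_k\sigma_k\in(0,1)$ and taking the maximum of the finitely many constants gives a uniform local Hölder estimate valid on every ball of the cover.

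To pass from local to global uniform Hölder continuity I would use a standard Lebesgue-number argument. Let $\delta>0$ be a Lebesgue number for the finite open cover of $\overline\Omega$. For any two points $\bx,\by\in\overline\Omega$ with $|\bx-\by|<\delta$, both lie in a single ball of the cover, where the local Hölder estimate directly yields $|Q(\bx)-Q(\by)|\leq C|\bx-\by|^\sigma$. For $|\bx-\by|\geq\delta$ the bound is automatic from the fact that $Q$ takes values in the bounded set $\overline\sM$, after absorbing the factor $\delta^{-\sigma}$ into $C$. Hence $Q\in C^\sigma(\overline\Omega)$.

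The compactness of $\Lambda(Q)$ then follows at once: since $\partial\sM$ is a closed subset of $\sS_0$ and $Q:\overline\Omega\to\overline\sM$ is continuous, $\Lambda(Q)=Q^{-1}(\partial\sM)$ is closed in the compact set $\overline\Omega$ and is therefore compact. The only real obstacle is bookkeeping, namely choosing the finite cover so that every point of $\overline\Omega$ lies in the \emph{interior} of some ball where Theorem 1.2 or Theorem 1.3 applies, and checking that the dependence of the constants on $r$ poses no issue once $r$ is fixed; no new analytical input beyond Theorems 1.2 and 1.3 is needed.
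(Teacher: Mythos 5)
Your overall strategy — verify local minimality by extending competitors by $Q$, cover $\overline\Omega$ by finitely many balls of the two types, invoke Theorems 1 and 2 on each, take the minimum exponent and maximum constant, patch with a Lebesgue-number argument, and then observe $\Lambda(Q)=Q^{-1}(\partial\sM)$ is closed in the compact $\overline\Omega$ — is exactly the (unstated) proof the paper has in mind, since the corollary is presented immediately after Theorems 1 and 2 with no further argument.

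There is, however, one technical slip in the covering step as you wrote it. You use the same radius $r$ for both families of balls: interior balls $B_r(\bo_i)$ requiring $B_{4r}(\bo_i)\subset\Omega$ (so $\operatorname{dist}(\bo_i,\partial\Omega)\geq 4r$), and boundary balls $B_r(\bo_j)\cap\overline\Omega$ with $\bo_j\in\partial\Omega$. A point $\bx$ with $r\leq\operatorname{dist}(\bx,\partial\Omega)<3r$ is covered by neither family: it is too far from $\partial\Omega$ to lie in any $B_r(\bo_j)$, and any interior center $\bo_i$ with $\bx\in B_r(\bo_i)$ would satisfy $\operatorname{dist}(\bo_i,\partial\Omega)<4r$, violating the interior requirement. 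The fix is elementary — e.g.\ use boundary balls of radius $\rho\leq r_1$ to cover the collar $\{\operatorname{dist}(\cdot,\partial\Omega)<\rho\}$, and interior balls of radius $\rho/4$ (so that $B_{4\cdot\rho/4}(\bx)=B_\rho(\bx)\subset\Omega$ whenever $\operatorname{dist}(\bx,\partial\Omega)\geq\rho$) to cover the compact remainder — but as stated the finite family you describe does not cover $\overline\Omega$. Once that bookkeeping is corrected, the rest of your argument (uniform constant via $\sqrt{\sF(Q)+1}$, Lebesgue number, boundedness of $\overline\sM$ for well-separated points, closedness of $\partial\sM$) is sound and complete.
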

We next assume that $f_e=f_{ld}(Q,D)$ (and hence from our assumptions $f_{ld}$ satisfies (1.4)).
The condition $Q\in\sS_0$ allows us to express the nine components of $Q$ in terms of the five independent variables
\begin{equation}
\bz=(z^1,\ldots,z^5)=(Q_{11},Q_{12},Q_{13},Q_{22},Q_{23}),
\end{equation}
as $Q=\tilde Q (\bz)$, so that $Q(\bx)=\tilde Q(\bz(\bx)).$ The energy takes the form
\begin{eqnarray}
{\sF'}(\bz)&=&\int_\Omega [a_{ij}^{lk}(\bz)\partial_{x_l} z^i \partial_{x_k}z^j+b_i^\ell(\bz)\partial_{x_l}z^i\\
&+& f_b (\tilde Q(\bz))]d\bx\nonumber
\end{eqnarray}
where the coefficients are polynomials and there is a constant $\lambda>0$ so that
\begin{eqnarray}
&& a_{ij}^{lk}(\bz)\zeta^i_l\zeta_k^j\geq\lambda |\zeta|^2\text{ for all}\\
&& \zeta\in\bR^{5\times 2}\text{ and }\bz\text{ such that }\tilde Q(\bz)\in\overline{\sM}.\nonumber
\end{eqnarray}
From elliptic regularity theory ( see \cite{G} and \cite{C} ) we obtain the following:

\begin{corr} Assume $f_e=f_{ld}(Q,D)$ and $f_{ld}$ satisfies our assumptions. Then:
\begin{enumerate}
   \item If $B$ is an open disk contained in $\Omega$, a finite energy local minimizer $Q$ in  $H^1(B;\overline\sM)$  for $\sF$  satisfies $Q\in C^\infty (B\setminus \Lambda(Q))$.
   \item If $Q$ and  $B=B_{4r}(\bo)$ are as in Theorem 2 such that $B\cap\Lambda(Q)=\emptyset$, then $Q$ is as smooth in $B_r(\bo)\cap \overline\Omega$ as $\partial\Omega$ and $Q_0$ allow.
In particular if  $B\cap\partial\Omega$ is of class $C^{k,\alpha}$ and $Q_0\in C^{k,\alpha}(B\cap\partial\Omega)$ for some $k\geq 2$ and $0<\alpha<1$, then $Q\in C^{k,\alpha}(B_r(\bo)\cap\overline\Omega)$.
 \end{enumerate}

\end{corr}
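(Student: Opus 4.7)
The plan is to combine the Hölder continuity given by Theorems 1 and 2 with standard regularity theory for quasi-linear elliptic systems applied to the Euler--Lagrange equations of $\sF'$ in (1.7). The crucial observation is that away from $\Lambda(Q)$ the tensor $Q$ takes values in a compact subset of $\sM$, and there both the parametrization (1.6) and the bulk density $f_b$ are $C^\infty$ (the latter because $f\in C^\infty(\sM)$).

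\textbf{Part (1).} Fix $\bx_0\in B\setminus\Lambda(Q)$. By Theorem 1, $Q$ is Hölder continuous in a neighborhood of $\bx_0$, so there is a disk $B'\subset\subset B$ about $\bx_0$ and a compact set $K\subset\sM$ with $Q(\overline{B'})\subset K$. On $B'$ the corresponding $\bz(\bx)$ from (1.6) is a local minimizer of $\sF'$, and its first variation gives the quasi-linear elliptic system
\begin{equation*}
-\partial_{x_l}\!\bigl(A_{mj}^{lk}(\bz)\,\partial_{x_k} z^j\bigr)=G_m(\bz,\nabla\bz),\qquad m=1,\ldots,5,
\end{equation*}
where $A_{mj}^{lk}$ is the natural symmetrization of $a_{ij}^{lk}$ (uniformly elliptic by (1.8)) and $G_m$ is polynomial in $\nabla\bz$ with coefficients $C^\infty$ in $\bz$ on a neighborhood of $K$. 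Since $\bz\in H^1\cap C^\sigma(B')$, the leading coefficients $A_{mj}^{lk}(\bz(\bx))$ are Hölder in $\bx$, and the Campanato/Morrey-space theory for quasi-linear systems in \cite{G} and \cite{C} gives $\bz\in C^{1,\alpha}_{\loc}(B')$. Differentiating the system and iterating Schauder estimates upgrades this to $\bz\in C^\infty(B')$, so $Q\in C^\infty(B\setminus\Lambda(Q))$.

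\textbf{Part (2).} Let $Q$ and $B=B_{4r}(\bo)$ be as in Theorem 2 with $B\cap\Lambda(Q)=\emptyset$. Theorem 2 provides $Q\in C^\sigma(\overline{B_{2r}(\bo)\cap\Omega})$, and together with $B\cap\Lambda(Q)=\emptyset$ this places $Q$ in a compact subset of $\sM$ on $\overline{B_{2r}(\bo)\cap\Omega}$. Straightening $\partial\Omega$ near $\bo$ by a $C^{k,\alpha}$ diffeomorphism and subtracting a $C^{k,\alpha}$ extension of $Q_0$ reduces the minimality condition to a boundary-value problem for the same quasi-linear system on a half-disk, with zero Dirichlet data and $C^{k-1,\alpha}$ coefficients in $\bx$ after the initial interior $C^{1,\alpha}$ step. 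Boundary Schauder estimates for such systems (again \cite{G}, \cite{C}) then yield $Q\in C^{k,\alpha}(B_r(\bo)\cap\overline{\Omega})$.

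The main obstacle is the first $H^1\cap C^\sigma\to C^{1,\alpha}$ step: general $H^1$ minimizers of vector-valued variational problems admit only partial regularity, so the \emph{a priori} continuity furnished by Theorems 1 and 2 is what allows a frozen-coefficient/hole-filling argument, applied to the linearization, to close the gap to Hölder-continuous gradients. Once $C^{1,\alpha}$ is in hand, the iterative bootstrap to $C^\infty$ (interior) or $C^{k,\alpha}$ (boundary) is standard.
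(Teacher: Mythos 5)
Your interior argument (Part 1) follows the paper's route: Hölder continuity from Theorem 1 reduces the coefficients to Hölder functions of $\bx$, and then one upgrades to $H^{2,p}$ for some $p>2$ and hence $C^{1,\alpha}$, with Schauder bootstrap afterwards. The paper makes the mechanism more precise by citing Giaquinta, Ch.~VI, Proposition 3.1, which is a two-dimensional result: because the Euler--Lagrange system (1.7) has quadratic growth in $\nabla\bz$ on the right-hand side (natural growth conditions), the passage from a continuous $H^1$ solution to $H^{2,p}$ hinges on a Gehring-type reverse Hölder inequality that is available in $n=2$ but not in general in higher dimensions. Your phrase ``Campanato/Morrey-space theory'' points in the right direction, but you should name the two-dimensional reverse-Hölder/Gehring step explicitly, since it is the only reason the argument closes.

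For Part 2 there is a genuine gap. You write that straightening $\partial\Omega$ and subtracting an extension of $Q_0$ leaves a system with ``$C^{k-1,\alpha}$ coefficients in $\bx$ after the initial interior $C^{1,\alpha}$ step,'' and then invoke boundary Schauder estimates. But an \emph{interior} $C^{1,\alpha}$ estimate says nothing about regularity up to $\partial\Omega$; the leading coefficients $a^{lk}_{ij}(\bz(\bx))$ are only known to be $C^\sigma$ up to the boundary, and the right-hand side still has quadratic gradient growth. The crucial missing step is a reverse Hölder inequality for $|\nabla\bz|^2$ on half-disks $B_{2\rho}(\bx_0)\cap\Omega$ up to the boundary, which then yields $\bz\in H^{2,p}(B_{r/2}(\bo)\cap\Omega)$ for some $p>2$ and hence $C^{1,\alpha}$ up to $\partial\Omega$. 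The paper does exactly this: it proves such a boundary reverse Hölder estimate (Proposition A.1 in the Appendix, via difference quotients in the tangential direction, a pointwise bound on $\partial^2_{x_2}\bz$ from the PDE, and Sobolev--Poincar\'e with the boundary data subtracted off), and only then invokes linear Schauder theory. Without that boundary estimate your bootstrap has no starting point, so Part 2 as written is incomplete.
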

In Section 3 we analyze $\Lambda(Q)$ for local minimizers, in the case
\begin{eqnarray}
f_e(Q,\nabla Q)&=&L_1 |\nabla Q|^2 + L_4 \varepsilon_{lkj} Q_{\ell i} Q_{k i,x_j}\\
&+& L_5\ Q_{lk}\ Q_{i j,x_l}\ Q_{i j,x_k}.\nonumber
\end{eqnarray}
If $(Q,D)\in\overline\sM\times\sD$, we have
\[
-{1\over 3} |D|^2\leq Q_{lk} D_{ijl} D_{ijk}\leq {2\over 3} |D|^2.
\]
Thus the natural ellipticity condition for (1.9) is
\begin{eqnarray}
L_1-{L_5\over 3}>0&\qquad\text{ if }L_5 \geq 0,\\
L_1+{2 L_5\over 3}>0&\qquad\text{ if }L_5<0.\nonumber
\end{eqnarray}
It follows that if $L_1$ and $L_5$ satisfy (1.10) and $L_4$ is fixed then (1.4) holds for the elastic density (1.9) for some positive constants $\alpha_1,\alpha_2,M_1,M_2$. The uniform convexity of the elastic density $f_e(Q,D)$ in $D$ is  due to  having  $Q$ constrained to  values in $\overline\sM$. This structure leads to the existence of minimizers for $\sF$ over $\sA_0$ with $f_e$ as in (1.9) using direct methods. In contrast to this setting, in \cite{BM1} Ball  and Majumdar  examined the classical unconstrained (Landau-de Gennes) energy that included the case where $f_e$ is as in (1.9) and $f_b$  is a polynomial in $Q$. They studied the problem of minimizing the energy  over $H^1(\Omega;\sS_0)$ with $Q=Q_0$ on $\partial\Omega$. They proved that if $L_5\neq 0$ then the energy is not bounded below and  the unconstrained minimum problem is ill-posed. Thus the unconstained energy is ill suited for investigating certain relevant elastic effects. These observations  motivate  investigating the minimum problem for the Maier-Saupe energy. Minimizers for the constrained problem, however, may a priori take on un-physical states. We show that if $f_e$ is as in (1.9) then this is impossible.
We prove the following physicality result.

\begin{theorem}Let $Q$ be a finite energy local minimizer for $\sF(\cdot;B)$ for a ball $B\subset\Omega$  where $f_e$ is as in (1.9) satisfying (1.10). Then $\Lambda(Q)\cap B=\emptyset$ and we have $Q\in C^\infty(B)$.
\end{theorem}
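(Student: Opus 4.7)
The $C^\infty$ conclusion is immediate from Corollary 2(1) once we know $\Lambda(Q)\cap B=\emptyset$, so the whole content of the theorem is the physicality statement. I argue by contradiction: assume there exists $\bx_0\in\Lambda(Q)\cap B$, and construct an admissible competitor of strictly lower energy.

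By Theorem 1 applied to a disk $B_{4r_0}(\bx_0)\Subset B$, $Q$ is H\"older continuous near $\bx_0$, and since $Q(\bx_0)\in\partial\sM$ we have $\operatorname{dist}(Q(\bx),\partial\sM)\le C|\bx-\bx_0|^\sigma$ on a small disk about $\bx_0$. The Ball--Majumdar analysis of $f_{ms}$ via its optimal density $\hat\rho=Z(\mu)^{-1}\exp\bigl(\mu\!:\!(\bp\otimes\bp-\tfrac13 I)\bigr)$---Laplace's method as the Lagrange multiplier $|\mu|$ blows up when an eigenvalue of $Q$ tends to $-\tfrac13$ or $\tfrac23$---gives the quantitative singularity $f_{ms}(Q)\ge c\log\bigl(1/\operatorname{dist}(Q,\partial\sM)\bigr)-C$ as $Q\to\partial\sM$, so $f(Q(\bx))\ge c\sigma\log(1/|\bx-\bx_0|)-C$ near $\bx_0$.

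The candidate competitor is the shrinking
\[
V_\epsilon=(1-\epsilon\phi)Q+\epsilon\phi R,\qquad 0<\epsilon<1,
\]
with $R\in\operatorname{int}\sM$ fixed (take $R=0$) and $\phi=\psi^2$, $\psi\in C_c^\infty(B)$, $\psi\equiv 1$ near $\bx_0$. Convexity of $\overline\sM$ guarantees $V_\epsilon\in\overline\sM$, and $V_\epsilon=Q$ off $\operatorname{supp}\phi\Subset B$. Expanding $\mathcal{F}(V_\epsilon;B)-\mathcal{F}(Q;B)$ to first order in $\epsilon$, convexity of $f$ together with direct expansion of $|V_\epsilon|^2$ yields the bulk estimate
\[
\int_B\bigl[f_b(V_\epsilon)-f_b(Q)\bigr]\le\epsilon\!\int_B\!\phi\bigl[f(R)-f(Q)+2\kappa\,Q\!:\!(Q-R)\bigr]+O(\epsilon^2),
\]
whose main term supplies a gain of order $-\epsilon\int\phi\log(1/|\bx-\bx_0|)$ by the singularity bound. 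The elastic first variation from (1.9) splits term by term into a \emph{scaling} piece $-\epsilon\int\phi\bigl[2L_1|\nabla Q|^2+3L_5Q_{\ell k}Q_{ij,x_\ell}Q_{ij,x_k}+2L_4\varepsilon_{\ell kj}Q_{\ell i}Q_{ki,x_j}\bigr]$, controlled in sign by ellipticity (1.10), and a \emph{cutoff} piece from $\nabla\phi$ bounded via Cauchy--Schwarz and the identity $|\nabla\phi|^2/\phi=4|\nabla\psi|^2$ by $C\epsilon\bigl(\|\nabla\psi\|_{L^2}^2+\|\nabla\psi\|_{L^2}\|\nabla Q\|_{L^2(\operatorname{supp}\nabla\psi)}\bigr)$. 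The algebraic identity $\varepsilon_{\ell kj}Q_{\ell i}Q_{ki}\equiv 0$ (antisymmetric against symmetric) is crucial here: it annihilates the leading $L_4$ cutoff-derivative term outright, so no new Young-type inequality is needed on the $L_4$ contribution.

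The principal obstacle, and the technical heart of the argument, is the delicate two-dimensional quantitative balance between the logarithmic bulk gain and the cutoff cost: a na\"ive Lipschitz cutoff of width $\rho$ gives $\|\nabla\psi\|_{L^2}^2=O(1)$, swamping the bulk gain $\sim\rho^2\log(1/\rho)$. The way through combines (i) a two-scale logarithmic-capacity cutoff ($\|\nabla\psi\|_{L^2}^2=O(1/\log(R_0/\rho))$ between $B_\rho$ and a fixed enclosing $B_{R_0}$), (ii) the absolute continuity $\|\nabla Q\|_{L^2(B_{R_0})}\to 0$ as $R_0\to 0$ to kill the cross term, (iii) integration of the singularity estimate over all of the set where $\phi\equiv 1$ to upgrade the bulk gain, and, if needed, (iv) the strict convexity of $f_{ms}$ near $\partial\sM$, which furnishes a second-order convexity gap $\sim\epsilon^2\int\phi^2\,f_{ms}''(Q)(Q-R,Q-R)$ whose integrand diverges at $\bx_0$ and supplements the first-order bound when the cancellation from the Euler--Lagrange equation (valid in the interior of $\sM$ by Corollary 2) swallows the leading $\epsilon$-term. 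Balancing these scales gives $\mathcal{F}(V_\epsilon;B)-\mathcal{F}(Q;B)<0$ for an admissible pair $(\epsilon,\phi)$, contradicting local minimality. Hence $\Lambda(Q)\cap B=\emptyset$, and Corollary 2(1) upgrades $Q$ to $C^\infty(B)$.
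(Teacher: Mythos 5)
Your proposal takes a genuinely different route from the paper's, and it has genuine gaps. For context, the paper proves Theorem 3 by iteratively sharpening a Morrey-type decay estimate: it upgrades the harmonic replacement of Theorem 1 to an $\sL$-replacement, where $\sL(\overline\by)$ is the constant-coefficient operator obtained by freezing the quadratic part of $f_e$ at $Q(\overline\by)$ and performing a linear change of variables so that $c_{ij}(Q(\bo))=\delta_{ij}$. Comparing $Q$ with this replacement gives a decay inequality whose iteration raises the H\"older exponent from $\delta$ to $\min(5\delta/4,\theta)$; after finitely many steps one has $\theta=3/4$, and a final integration of the resulting differential inequality yields $\rho^{-2}\int_{B_\rho(\bo)}f_b(Q)\,d\bx\le C$, which contradicts $\lim_{\bx\to\bo}f_b(Q(\bx))=\infty$ when $\bo\in\Lambda(Q)$. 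No rate of divergence of $f$ near $\partial\sM$ is used.

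Your direct competitor $V_\epsilon=(1-\epsilon\phi)Q+\epsilon\phi R$ is a natural idea, but the argument as written does not close, for three concrete reasons. First, the logarithmic singularity bound $f_{ms}(Q)\gtrsim\log(1/\mathrm{dist}(Q,\partial\sM))$ is a property of the specific Maier--Saupe potential obtained by Laplace asymptotics; Theorem 3 is stated for an arbitrary convex $f\in C^\infty(\sM)$ with $\lim_{Q\to\partial\sM}f(Q)=+\infty$, and that class contains functions blowing up arbitrarily slowly, so this step is not available under the hypotheses. Second, even granting the log rate, the quantitative balance you correctly identify as the heart of the matter does not work out. With the log-capacity cutoff between $B_\rho$ and $B_{R_0}$ one has $\|\nabla\psi\|_{L^2}^2\sim 1/\log(R_0/\rho)$, so the first-order cutoff cost is $\gtrsim\epsilon/\log(R_0/\rho)$, while the bulk gain is bounded by $\epsilon\int_{B_{R_0}}\phi\,f(Q)\,d\bx$; using $\phi=\psi^2$ and $f(Q(\bx))\lesssim\log(1/|\bx-\bx_0|)$ from the $C^\sigma$ estimate, this integral is $O\!\big(R_0^2\log(1/R_0)/\log^2(R_0/\rho)\big)$ --- a full factor of $\log(R_0/\rho)$ smaller than the cost, so the cost dominates as $\rho\to0$, and sending $R_0\to0$ kills the gain even faster since $f(Q)\in L^1$. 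Items (iii) and (iv) on your list do not change these orders. Third, the claim that the scaling piece $-\epsilon\phi\bigl[2L_1|\nabla Q|^2+3L_5Q_{\ell k}Q_{ij,x_\ell}Q_{ij,x_k}+2L_4\varepsilon_{\ell kj}Q_{\ell i}Q_{ki,x_j}\bigr]$ has a sign controlled by (1.10) is false: the three terms of (1.9) rescale with different homogeneities, so the bracket is not a positive multiple of $f_e$. For example, with $L_4=0$, $L_5>0$, and $L_1$ slightly above $L_5/3$ (so (1.10) holds), one has $2L_1-L_5<0$, and the bracket goes negative when an eigenvalue of $Q$ is near $-1/3$ and $\nabla Q$ is aligned accordingly; the $L_4$ term is likewise sign-indefinite. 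Finally, invoking the Euler--Lagrange equation to cancel the leading $\epsilon$-term begs the question: Corollary 2 gives the equation only off $\Lambda(Q)$, and on a ball containing a point of $\Lambda(Q)$ the variation is one-sided and $\int\phi\,f'(Q):(R-Q)$ is not even known a priori to be finite. The paper's proof avoids all of this by never differentiating $f_b$.
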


Theorem 3 generalizes a result of Ball and Majumdar \cite{BM1} proved for the case in which  $\Omega$ is replaced by a domain $D$ in $\bR^3$, the elasticity density $f_e(\nabla Q)=L_1\ |\nabla Q|^2$, and  $Q_0$ is valued in $\mathcal{M}$. They  prove that minimizers $Q$ in $\sA_0$ are in $C^\infty(D)$ and have $\Lambda(Q)=\emptyset$.  In this paper we extend their result to local minimizers defined on a domain $\Omega\subset \bR^n$ based on our approach.  (See Theorem 4 and Corollary 3.) Further prior work concerning regularity results for the constrained minimum problem has been done by
  L. Evans, O. Kneuss, and H. Tran  in \cite{E1} where they investigated minimizers for energies of the form (1.1) assuming that $\Omega\subset\bR^n$ for $n\geq2$ and proved partial regularity results.

\section {H\"{o}lder Continuity and Higher Regularity}

Our principal technique  is to use elliptic replacements as a way of constructing comparison functions valued in $\overline\sM$.

\begin{defn}Let $Q\in H^1(\Omega;\overline\sM)$ and let $B$ be an open ball in $\bR^2$ such that $B\cap\Omega\neq\emptyset$.
Set
\begin{equation*}
Q_B(\bx)= \left\{
    \begin{array}{l l}
   Q(\bx),\hskip .8in\text{ for }\bx\in\Omega\backslash B, \\
H (\bx)=[H_{ij}(\bx)],\text{ for }\bx\in B\cap\Omega\text{ and }1\leq i,j\leq 3,
\end{array}\right.
\end{equation*}
where
\begin{eqnarray*}
\Delta H_{ij}&=&0\qquad\text{in }B\cap\Omega,\\
H_{ij}&=&Q_{ij}\qquad\text{on }\partial(B\cap\Omega).
\end{eqnarray*}
\end{defn}

\begin{lemma}
 $Q_B(\bx)\in\overline{\sM}$ for all $\bx\in\Omega$.
In addition, $Q_B(\bx_0)\in\partial \sM$ for some $\bx_0\in B\cap\Omega$ if and only if $Q_B(\bx)\in\partial\sM$ for all $\bx$ in the component of $B\cap\Omega$ containing $\bx_0$.
\end{lemma}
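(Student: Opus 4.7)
The plan is to reduce the matrix-valued claim to a family of scalar maximum-principle statements by describing $\overline{\sM}$ via linear inequalities. Since $Q \in \sS_0$ forces $\lambda_1+\lambda_2+\lambda_3=0$, one has $Q \in \overline{\sM}$ if and only if the smallest eigenvalue is at least $-1/3$ (the upper bound $\lambda_3 \leq 2/3$ then follows automatically from $\mathrm{tr}(Q)=0$), equivalently $\bp^T Q \bp \geq -1/3$ for every $\bp \in \bS^2$. Likewise, $Q \in \partial\sM$ is equivalent to having $\bp^T Q \bp \in \{-1/3, 2/3\}$ for at least one unit vector $\bp$. This rewrites membership in the convex set $\overline{\sM}$ as an intersection of half-spaces, which is what makes harmonic replacement well-behaved.

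With this characterization in hand, fix $\bp \in \bS^2$ and set $u_\bp(\bx) := \bp^T H(\bx)\bp + 1/3$ on $B \cap \Omega$. As a finite linear combination of the harmonic components $H_{ij}$ plus a constant, $u_\bp$ is harmonic; its $H^1$ trace on $\partial(B \cap \Omega)$ equals $\bp^T Q(\bx)\bp + 1/3 \geq 0$, since $Q(\bx) \in \overline{\sM}$ a.e.\ on $\partial(B \cap \Omega)$. The weak maximum principle then gives $u_\bp \geq 0$ throughout $B \cap \Omega$, and since $\bp$ was arbitrary, $\lambda_{\min}(H(\bx)) \geq -1/3$ pointwise. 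In addition, $\mathrm{tr}(H) = H_{11}+H_{22}+H_{33}$ is harmonic with zero boundary trace, hence vanishes identically, so $H \in \sS_0$. This proves the first assertion, $H(\bx) \in \overline{\sM}$ for all $\bx \in B \cap \Omega$.

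For the second assertion, suppose $H(\bx_0) \in \partial\sM$ with $\bx_0$ in a connected component $U$ of $B \cap \Omega$. Pick a unit vector $\bp_0$ with $\bp_0^T H(\bx_0) \bp_0 \in \{-1/3, 2/3\}$. In the case $\bp_0^T H(\bx_0)\bp_0 = -1/3$ we have $u_{\bp_0}(\bx_0) = 0$, and since $u_{\bp_0}$ is a non-negative harmonic function attaining the value $0$ at the interior point $\bx_0$, the strong maximum principle forces $u_{\bp_0} \equiv 0$ on $U$. Consequently $\lambda_{\min}(H(\bx)) \leq -1/3$ on $U$; combined with the first part this gives equality, so $H(\bx) \in \partial\sM$ throughout $U$. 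The case $\bp_0^T H(\bx_0)\bp_0 = 2/3$ is handled symmetrically by applying the strong maximum principle to the non-negative harmonic function $2/3 - \bp^T H \bp$.

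The arguments are essentially routine given the convex-geometric characterization above, so there is no serious obstacle. The one point that needs care is the weak (trace) formulation of the maximum principle: when $B$ meets $\partial\Omega$ the boundary $\partial(B \cap \Omega)$ is merely Lipschitz, and the boundary inequality $\bp^T Q\bp + 1/3 \geq 0$ holds only a.e.\ on the trace, so one has to invoke the $H^1$ version of the maximum principle rather than a pointwise classical statement. Everything else is pure linear algebra plus interior regularity of harmonic functions.
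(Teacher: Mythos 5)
Your argument is essentially the paper's own: both proofs reduce the matrix statement to scalar maximum-principle statements by examining the harmonic functions $\bx \mapsto \zeta^t H(\bx)\zeta$ for unit vectors $\zeta$, using the weak maximum principle for containment in $\overline{\sM}$ and the strong maximum principle for the boundary characterization. Your slight streamlining (only the lower bound $\bp^T Q\bp \geq -1/3$ needs checking, since tracelessness forces the upper bound) is a nice observation but does not change the structure.

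There is one small gap: you verify that $tr\,H \equiv 0$ but never verify that $H(\bx)$ is symmetric, and your whole argument hinges on reading eigenvalues of $H$ off the quadratic form $\bp^T H\bp$, which sees only the symmetric part of $H$. Membership in $\sS_0$ requires both $tr\,Q = 0$ and $Q = Q^t$, and the latter is what lets you conclude $\lambda_{\min}(H) \geq -1/3$ from $\bp^T H\bp \geq -1/3$. The fix is the same uniqueness argument you already used for the trace: $H_{ij}$ and $H_{ji}$ are both harmonic in $B\cap\Omega$ with the same boundary data (since $Q_{ij} = Q_{ji}$ a.e.\ on $\partial(B\cap\Omega)$), hence coincide. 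The paper states this explicitly at the start of its proof; you should too.
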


\begin{proof} Let $tr A$ denote the trace of the square matrix $A$. We have that $tr\ Q_B(\bx)=0$ and $Q_B(\bx)=Q_B^t (\bx)$ in $H^{1/2}(\partial[B\cap\Omega])$.
Thus these equalities hold for all $\bx\in B\cap\Omega$ since $Q_B$ is harmonic in $B\cap\Omega$.
It follows that $Q_B(\bx)\in\sS_0$ for all $\bx\in B\cap\Omega$.
Next we define
\[
g(\bx;\zeta)=\zeta^t Q_B(\bx)\zeta\,\text{ for each }\bx\in B\cap\Omega \text{ and }\zeta\in\bR^3,\,|\zeta|=1.
\]
For each $\zeta$ this is a harmonic function in $\bx$.
We have that $Q(\bx_0)\in\partial\sM$ for some $\bx_0\in B\cap\Omega$ if and only if $g(\bx_0;\zeta_0)={2\over 3}$ or $-{1\over 3}$ for some $\zeta_0$.
  Since$-{1\over 3}\leq g(\bx;\zeta_0)\leq {2\over 3}$ then by the strong maximum principle such an $\bx_0$ exists if and only if $Q_B(\bx)\in\partial\sM$ for all  $\bx$ in the component of $B\cap\Omega$ containing $\bx_0$.
\end{proof}
 Recall that if $D$ is an open subset of $\bR^2$ with a piece-wise smooth boundary then a function $v\in H^1(D)$ is defined pointwise $\sH^1$-almost everywhere on $\overline D$. (See \cite{E}.)
\begin{defn} Let $U$ be an open set in $\bR^2$ with a piecewise smooth boundary and let $E\subset\overline U$.  A function $Q\in H^1(U;\overline\sM)$ is defined to be  separated from $\partial\sM$ on $E$ if and only if $\underset{\bx\in E}{ess}\inf\text{ dist}(Q(\bx),\partial\sM)>0$ where the essential infimum is taken with respect to $\sH^1$-measure.
\end{defn}

\noindent
{\it Proof of Theorem 1}.
Let $\bx_0\in B_r(\bo)$, $0<\rho\leq r$, ${\rho\over 2}\leq s\leq\rho$, and set $Q_s(\bx) := Q_{B_s(\bx_0)}(\bx)$.
By (1.2), $f_b(Q_s)=f_b(H)\leq f(H)+b_0$ in $B_s(\bx_0).$ Using (1.4) and the fact that $|Q|$ is uniformly bounded almost everywhere in $B_{4r}(\bo)$ (since $Q$ is valued in $\overline\sM$ a.e.) we have
\begin{eqnarray}
&&\int_{B_{\rho/2}(\bx_0)} [\alpha_1|\nabla Q|^2 + f_b (Q)]d\bx\leq {\sF}(Q;B_{2\rho}(\bx_0))+C_0\rho^2\\
&&\leq {\sF}(Q_s;B_{2\rho}(\bx_0))+C_0\rho^2\nonumber\\
&&\leq {\sF}(Q_s;B_{s}(\bx_0))+{\sF}(Q;B_{2\rho}(\bx_0)\backslash B_s(\bx_0))+C_0\rho^2\nonumber\\
&&\leq \alpha_2\int_{B_s(\bx_0)} |\nabla H|^2d\bx+\int_{B_s(\bx_0)} f(H)d\bx+{\sF}(Q;B_{2\rho}(\bx_0)\backslash B_s(\bx_0))\nonumber\\
&&\qquad+ C_1 \rho^2.\nonumber
\end{eqnarray}
where $C_1$ depends only on $M_1, M_2,\text{ and } b_0.$ Let $\overline Q$ be the average of $Q$ on $B_{2\rho}(\bx_0)\backslash B_s(\bx_0)$.
Since $H$ is harmonic and $H=Q$ on $\partial B_s(\bx_0)$ we have
\begin{eqnarray}
\int_{B_s(\bx_0)}|\nabla H|^2d\bx&= &\int_{B_s(\bx_0)}|\nabla (H-\overline Q)|^2d\bx\\
&\leq& C_2\|Q-\overline Q\|^2_{H^{1/2}(\partial B_s(\bx_0))}\nonumber\\
&\leq & C_3\|Q-\overline Q\|^2_{H^1(B_{2\rho}(\bx_0)\backslash B_s(\bx_0))}\nonumber\\
&\leq&C_4\int_{B_{2\rho}(\bx_0)\setminus B_{s}(\bx_0)}(|\nabla Q|^2+\rho^{-2}|Q-\overline Q|^2)d\bx\nonumber\\
&\leq& C_5\int_{B_{2\rho}(\bx_0)\setminus B_{s}(\bx_0)}|\nabla Q|^2d\bx\nonumber
\end{eqnarray}
where  the $C_i$ are independent of $s, \rho$, and $Q$ for ${\rho\over 2}\leq s\leq\rho$.
We next take $s$ so that
\begin{equation}
\int_{\partial B_s(\bx_0)} f_b(Q)d\bs < \infty.
\end{equation}
where $d\bs$ denotes arc-length. This holds for almost every ${\rho\over 2}\leq s\leq\rho$. Note that (2.3) holds if and only if $Q(\bx)\in\sM$ almost everywhere in $\partial B_s(\bx_0)$ and $f(Q)\in L^1(\partial B_s(\bx_0)).$
If $Q$ is separated from $\partial\sM$ on $\partial B_s(\bx_0)$ then $f$ is $C^\infty$ on a neighborhood of the range of $H|_{B_s(\bx_0)}$. Let $Q=\tilde Q(\mathbf z)$ as in (1.6). It follows that  $f(\tilde Q(\bz))$ is a  convex function of $\bz$ and that it is a smooth function for all $\bz$ such that $\tilde Q(\bz)$ is in this neighborhood. Thus $f(H(\bx))=f(\tilde Q(\bz(\bx))),$ with $\bz(\bx)=(z_1,\cdots,z_5)(\bx)=(H_{11},H_{12},H_{13},H_{22},H_{23})(\bx)$,
is a classical subharmonic function on $B_s(\bx_0)$.
Indeed
\begin{eqnarray*}
\Delta f(H(\bx))&=&\Delta [ f(\tilde Q(\bz(\bx)))]\\&=& \partial_{z_l} [f(\tilde Q(\bz)]|_{\bz(\bx)} \Delta z_l(\bx)+\partial_{z_{l}z_{k}}^2 f(\tilde Q(\bz))|_{\bz(\bx)} \partial_{x_i} z_{l}\partial_{x_i} z_{k}
\geq0.
\end{eqnarray*}
It follows from this and the mean value theorem that
\begin{equation}
\int_{B_s(\bx_0)} f( H)d\bx\leq {s\over 2}\int_{\partial B_s(\bx_0)} f(Q)d\bs.
\end{equation}
If $Q$ is not separated from $\partial\sM$ on $\partial B_s(\bx_0)$ we consider $\tau H(\bx)$ for $\tau\uparrow 1$ and $\bx\in B_s(\bx_0)$.
By the maximum principle, since $H=Q$ on $\partial B_s(\bx_0),$ we have for $0<\tau<1$ that
$$
-{1\over 3}< {-\tau\over 3}\leq\tau\underset{\begin{subarray} \, \mathbf{x} \in B_s(\bx_0) \\ |\zeta|=1\end{subarray} } \inf \zeta^t H(\bx)\zeta\leq\tau
\underset{\begin{subarray} \, \mathbf{x} \in B_s(\bx_0) \\ |\zeta|=1\end{subarray} }\sup \zeta^t H(\bx)\zeta
\leq {2\tau\over 3} < {2\over 3}.
$$
Thus $\tau H\in \sM$ on $B_s(\bx_0),$ $\tau Q\in \sM$  almost everywhere on $\partial B_s(\bx_0)$, and $f(\tau Q)\in L^1(\partial B_s(\bx_0))$ for each $0<\tau<1$, and we get
\begin{equation*}
\int_{B_s(\bx_0)} f(\tau  H)d\bx\leq {s\over 2}\int_{\partial B_s(\bx_0)} f(\tau Q)d\bs.
\end{equation*}
Note that since $f$ is convex we have
\[
f(\tau Q)\leq\tau f(Q)+(1-\tau) f(O) \quad\text{  for all  } Q\in\sM.
\]
As $f$ is bounded below on $\sM$ we can apply the dominated converges theorem to the right side and Fatou's lemma to the left as $\tau\uparrow 1$.
Thus (2.4) holds for all $s$ such that (2.3) is true.

Next choose  a constant $C_6=C_6 (b_0,\kappa)$ and a value $\frac{\rho}{2}\leq\overline{s}\leq\rho$ so that
\begin{eqnarray}
{\overline s\over 2}\int_{\partial B_{\overline s}(\bx_0)} f(Q)d\bs&\leq& {\overline s\over 2}\int_{\partial B_{\overline s}(\bx_0)} f_b(Q)d\bs+C_6\rho^2\\
&\leq&\int_{B_\rho(\bx_0)\backslash B_{\rho/2}(\bx_0)}f_b(Q)d\bx + C_6\rho^2.\nonumber
\end{eqnarray}
Set $s=\overline s$ in (2.2) and (2.4); using these statements with (2.1) and (2.5) we arrive at
\begin{eqnarray*}
\mu_1\int_{B_{\rho/2}(\bx_0)} [|\nabla Q|^2+f_b (Q)]d\bx&\leq&\mu_2\int_{B_{2\rho(\bx_0)\backslash B_{\rho/2}(\bx_0)}}
[|\nabla Q|^2+f_b (Q)]d\bx\\
&+& C_7\rho^2
\end{eqnarray*}
where $\mu_1,\mu_2$, and $C_7$ are positive constants depending only on the constants in (1.2) and (1.4).
The above inequality allows us to do a "hole filling" argument. Adding $\mu_2\ds\int_{B_{\rho/2}}[|\nabla Q|^2 + f_b (Q)]d\bx$ to both sides we obtain
\begin{eqnarray*}
\int_{B_{\rho/2}(\bx_0)} [|\nabla Q|^2+f_b (Q)]d\bx&\leq&\theta\int_{B_{2\rho}(\bx_0)} [|\nabla Q|^2+f_b (Q)]d\bx\\
&+& C_8\rho^2
\end{eqnarray*}
where $\theta=\ds{\mu_2\over\mu_1+\mu_2} < 1$ for all $\bx\in B_r(\bo)$ and $\rho\leq r \leq r_0.$
Iterating this inequality and setting
\[
w_{\bx_0}(\rho)=\int_{B_\rho(\bx_0)}[|\nabla Q|^2 + f_b(Q)] d\bx
\]
we obtain the Morrey-type estimate
\[
w_{\bx_0}(\rho)\leq C(\frac{\rho}{r})^{2\sigma}[w_{\bx_0}(r)+r^{2\sigma}]
\]
for all $\bx_0\in B_r(\bo)$ and $\rho\leq\frac{r}{2}\leq\frac{r_0}{2}$ where $\sigma>0$ depends on $\theta$,
and $C$ depends on $\theta, r_0,$ and $C_8$. Since $Q$ is bounded, the theorem follows from this and Morrey's theorem. (See \cite{G}, Ch.\, III, Theorem 1.1, 1.2, and Lemma 2.1.) \qed

\smallskip\noindent

{\it Proof of Theorem 2}.
Since $\partial\Omega$ is of class $C^{2}$ there exists $0<\rho_0\leq r_1$ so that $\Omega$ satisfies the exterior sphere condition at each $\by\in\partial\Omega$  with a ball $B_{\rho_0}$ of radius $\rho_0$. It follows that for each  $\bx\in\Omega$ and $s>0$,  $B_s(\bx)\cap\Omega$ also satisfies the exterior sphere condition at each $\by\in\partial (B_s(\bx)\cap\Omega)$ with a ball $B_{\rho_0}$.

Assume the hypotheses of Theorem 2. Let  $\bo\in\partial\Omega$, $\bx_0\in B_r(\bo)\cap\Omega$,
 $0<\rho\leq r,\ {\rho\over 2}\leq s\leq\rho,\text{ and using Definition 2.1 set}$ $$Q_s(\bx) := Q_{B_s(\bx_0)}(\bx).$$

Just as in the proof of Theorem 1, we have
\begin{eqnarray}
&&\int_{B_{\rho/2}(\bx_0)\cap\Omega} [\alpha_1 |\nabla Q|^2+f_b (Q)]d\bx\leq {\sF}(Q; B_{2\rho}(\bx_0)\cap\Omega)+C_0\rho^2\\
&\leq& {\sF}(Q_s;B_{2\rho}(\bx_0)\cap\Omega)+C_0\rho^2\nonumber\\
&\leq&\int_{B_s(\bx_0)\cap\Omega} [\alpha_2 |\nabla H|^2+f_b( H)]d\bx+{\sF}(Q;(B_{2\rho}(\bx_0)\cap\Omega)\backslash B_s(\bx_0))\nonumber\\
&+& C_1\rho^2.\nonumber
\end{eqnarray}
Assume for now that $ H$ is separated from $\partial\sM$ on $\partial(B_s(\bx_0)\cap\Omega)$.
Then $f$ is $C^\infty$ on a neighborhood of $H|_{B_s(\bx_0)\cap\Omega}$ and
\[
 \Delta f(H)\geq 0\text{ on }B_s(\bx_0)\cap\Omega.
\]
We next solve
\begin{eqnarray*}
\Delta w=1&&\qquad\text{in}\qquad B_s(\bx_0)\cap\Omega,\\
w=0&&\qquad\text{on}\qquad \partial (B_s(\bx_0)\cap\Omega)
\end{eqnarray*}
and note that since $B_s(\bx_0)\cap\Omega$ satisfies the exterior sphere condition at each boundary point as described above we can  construct a barrier function relative to each point in $\partial (B_s(\bx_0)\cap\Omega)$ so as to ensure that the normal derivative
\[|\partial_\nu w(\by)|\leq C_2 s\text{ for each } \by\in\partial (B_s(\bx_0)\cap\Omega) \text{ for which } \partial_\nu w(\by) \text{ exists.}\]
To see this let $B_{\rho_0}(\bz)$ be the exterior sphere associated with $\by$ (so that $\overline B_{\rho_0}(\bz)\bigcap(\overline{B_s(\bx_0)\cap\Omega}) =\{\by\}$) and set $\eta=\eta(\bx)=|\bx-\bz|$. Here we can take $\rho_0=\rho_0(\partial\Omega)>0$ fixed and  assume without loss of generality that $s<\rho_0$. Then
 $ B_s(\bx_0)\cap\Omega\subset B_{(2s+\rho_0)}(\bz)\setminus B_{\rho_0}(\bz)$ and the function
 \[
 g(\eta(\bx))\equiv g(\eta)=\rho_0^2(3+\frac{6s}{\rho_0})\ln(\frac{\rho_0}{\eta})+3\rho_0(\eta-\rho_0)
 \]
 satisfies
 \[
g(\rho_0) =0,\, g(\eta)\leq 0 \text{ and } \Delta g\geq 1 \text{ for }\rho_0\leq\eta\leq\rho_0+2s.
\]
This can then be used as a barrier so that $|\partial_\nu w(\by)|\leq|g'(\rho_0)|=6s.$
The set $B_s(\bx_0)\cap\Omega$ is such that $w\in H^2(B_s(\bx_0)\cap\Omega)$ and we have
\begin{eqnarray*}
\int_{B_s(\bx_0)\cap\Omega} f( H)d\bx&=&\int_{B_s(\bx_0)\cap\Omega} w \Delta f( H)d\bx+\int_{\partial(B_s(\bx_0)\cap\Omega)} f(Q)\partial_\nu wd\bs\\
&\leq&\int_{\partial (B_s(\bx_0)\cap\Omega)} f(Q)\partial_\nu wd\bs.
\end{eqnarray*}
It follows that
\begin{equation}
\int_{B_s(\bx_0)\cap\Omega} f_b( H)d\bx\leq C_3 s\int_{\partial(B_s(\bx_0)\cap\Omega)} f_b(Q)d\bs+C_4 s^2.
\end{equation}
We remark that one needs $H(\bx)$ and $f(H(\bx))$ in $H^2(B_s(\bx_0)\cap\Omega)$ to directly apply Green's identity as above, however these functions are only in $H^1(B_s(\bx_0)\cap\Omega)$. We can circumvent this problem by using an approximation. We assume without loss of generality that $s$ is sufficiently small so that $B_s(\bx_0)\cap\Omega$ is strictly star-like with respect to a point $\bp\in B_s(\bx_0)\cap\Omega$ and set $H_\lambda(\bx)=H(\bp +\lambda(\bx-\bx_0))$. Then for $0<\lambda<1$ we have $H_\lambda$ and $ f(H_\lambda)$ in $C^\infty(\overline{(B_s(\bx_0)\cap\Omega)}$, such that $H_\lambda$ is harmonic, $f(H_\lambda)$ is subharmonic, and such that $H_\lambda\to H, f(H_\lambda) \to f(H)$ in $H^1(B_s(\bx_0)\cap\Omega)$ as $\lambda\uparrow 1$. We can then apply Green's identity for $\lambda<1$ and obtain (2.7) in the limit as $\lambda\uparrow 1$.

We next replace the condition that $ H$ is separated from $\partial\sM$ on $\partial(B_s(\bx_0)\cap\Omega)$ with the condition that $\ds\int_{\partial(B_s(\bx_0)\cap\Omega)}f_b(Q)d\bs<\infty$.
This is done just as before by considering $\tau H$ for $0<\tau<1$ for which (2.7) is valid and letting $\tau\uparrow 1$.
Combining (2.6) and (2.7) we have
\begin{eqnarray*}
\int_{B_{\rho/2}(\bx_0)\cap\Omega} [\alpha_1 |\nabla Q|^2 + f_b (Q)]d\bx&\leq&\alpha_2\int_{B_s(\bx_0)\cap\Omega} |\nabla H|^2d\bx\\
&+&C_5 s\int_{\partial(B_s(\bx_0)\cap\Omega)} f_b(Q)d\bs\\
&+& {\sF}(Q;(B_{2\rho}(\bx_0)\backslash B_s (\bx_0))\cap\Omega)+C_6\rho^2.
\end{eqnarray*}
We write $\partial(B_s(\bx_0)\cap\Omega)=(\partial B_s(\bx_0)\cap\Omega) \cup( B_s(\bx_0)\cap\partial\Omega).$ Then using (1.5) we have
\[\int_{\partial(B_s(\bx_0)\cap\Omega)} f_b(Q)d\bs\leq\int_{\partial B_s(\bx_0)\cap\Omega} f_b(Q)d\bs +C_7\]
and it follows that

\begin{eqnarray*}
\int_{B_{\rho/2}(\bx_0)\cap\Omega} [\alpha_1 |\nabla Q|^2 + f_b (Q)]d\bx&\leq&\alpha_2\int_{B_s(\bx_0)\cap\Omega} |\nabla H|^2d\bx\\
&+&C_5 s\int_{\partial B_s(\bx_0)\cap\Omega} f_b(Q)d\bs\\
&+& {\sF}(Q;(B_{2\rho}(\bx_0)\backslash B_s (\bx_0))\cap\Omega)+C_8\rho.
\end{eqnarray*}
At this point we argue just as in the proof of Theorem 1 arriving at
\begin{eqnarray*}
\beta_1\int_{B_{\rho/2}(\bx_0)\cap\Omega} [|\nabla Q|^2+f_b (Q)]d\bx&\leq&\beta_2\int_{(B_{2\rho}(\bx_0)\cap\Omega)\backslash B_{\rho/2}(\bx_0)}
[|\nabla Q|^2+f_b(Q)]d\bx\\
&+& C_9 \rho
\end{eqnarray*}
for fixed positive constants $\beta_1,\beta_2$, and $C_9$, where here they depend on $Q_0$ as well.
We next extend $Q$ in a neighborhood of $\Omega'$ of $\overline\Omega$ so that
\[
Q\in  H^1(\Omega';\overline\sM)\cap C^{0,1} (\Omega'\setminus\Omega;\overline\sM)
\]
and extend $f_b\equiv 0$ in $\Omega'\backslash\overline\Omega$.
We then have
\begin{eqnarray*}
\beta_1\int_{B_{\rho/2}(\bx_0)}[|\nabla Q|^2+f_b]d\bx&\leq&\beta_2\int_{B_{2\rho}(\bx_0)\backslash B_{\rho/2}(\bx_0)}[|\nabla Q|^2+f_b]d\bx\\
&+&C_{10}\rho
\end{eqnarray*}
for all $\bx_0$ in a neighborhood of $\partial\Omega$ and all $\rho$ sufficiently small.
Our assertion follows just as in the proof for Theorem 1.\qed

\noindent
{\it Proof of Corollary 2}.

To prove the interior regularity result, let $B$ be an open disk in $\Omega$ and $Q$ a local minimizer for $\cal F$ in $H^1(B;\overline \sM)$.
Assume $\overline{B_{4r}(\bold o)}\subset B\backslash \Lambda(Q)$.
As outlined in Section 1 it is enough to consider solutions to the quasilinear Euler--Lagrange equation for ${\cal F'}$ from (1.7) satisfying the strong ellipticity condition (1.8) in $\overline{B_{4r}(\bold o)}$.
It suffices to show that our solutions have H\"{o}lder--continuous first derivatives in $B_r(\bold o)$; higher regularity then follows using techniques from linear theory \cite{G,C}.
Note that $Q=\tilde Q(\bz(\bx))$, $f_b(\tilde Q(\bz))$ is a $C^\infty$ function if $\bz$ in a neighborhood of the range of $\bz(\bx)$ for $\bx$ in $\overline{B_{4r}(\bold o)}$, and the coefficients of $f_{ld}$ are polynomials in $\bz$.
By Theorem 1, $\bz(\bx)\in C^\sigma (\overline{B_{4r}(\bold o)})$ and we can apply the result from \cite{G}, Ch.~VI, Proposition 3.1 asserting that in two space dimensions a continuous weak solution is in $H^{2,p}$ for some $p>2$, and thus, first order derivations are H\"older continuous.
The argument given there proves an interior estimate.

To prove regularity near the boundary, we assume $Q$ is a local minimizer in $\Omega\cap B_{4r}(\bo)$, where $\bo\in\partial\Omega$ and $r<r_1$, $B_{4r}(\bold o)\cap\partial\Omega$ is of class $C^{k,\alpha}$ for some $k\geq 2$ and $\alpha\in (0,1)$, $Q_0\in C^{k,\alpha} (B_{4r}(\bold o)\cap\partial\Omega)$, and $\overline{B_{4r}(\bold o)}\cap \Lambda(Q)=\emptyset$.
As before, $Q=\tilde Q(\bz(\bx))$ and $f_p(\tilde Q(\bz))$ is a $C^\infty$ function if $\bz$ on a neighborhood of the range of $\bz(\bx)$ for $\bx\in\overline{B_{4r}(\bold o)\cap\Omega}$.
Here it is enough to prove that $Q$ has H\"older--continuous first derivatives in $B_{r\over 2} (\bold o)\cap\overline\Omega$.
One can derive a reverse--H\"older type inequality on sets $B_{2r} (\bx_0)\cap\Omega$ where $\bx_0\in B_{2r} (\bold o)\cap\partial\Omega$ in the spirit of that given in \cite{G}, Ch.~V, Sec. 2, and then apply the same argument to prove the estimate near the boundary asserting that the solution is in $H^{2,p} (B_{r\over 2} (\bold o)\cap\Omega)$.
The details for this inequality are given in the Appendix.
As before, higher regularities follows from linear theory.\qed

\section{Analysis of $\Lambda(Q)$}

In Section 2 we proved that a local minimizer on a ball $B_{4\overline r}(\bo)\subset\Omega$ satisfies
\begin{equation}
\|Q\|_{C^\delta(\overline{B_{\overline r}(\bo)})}\leq A_1
\end{equation}
for a fixed $0<\delta<1$  by establishing the Morrey--type estimate
\begin{equation}
\int_{B_\rho(\by)}[|\nabla Q|^2+f_b(Q)]d\bx\leq A_2\rho^{2\delta}
\end{equation}
for all $\by\in B_{\overline r}(\bo)$ and $0<\rho \leq \frac{\overline r}{2}$.
This was done by constructing comparison functions using harmonic replacements for each component of $Q$.
In this section we consider $f_e$ satisfying (1.9) and (1.10), and prove that if $Q$ is a finite energy local minimizer for $\sF$ on an open set $E$ then $\Lambda(Q)\cap E=\emptyset$.
We do this by refining the notion of replacement in this case so as to prove that (3.2) holds for any $0<\delta<1$.
We are then able to carry this one step further to get
\[
\int_{B_\rho(\bo)}f_b(Q)d\bx\leq A_3\rho^2
\]
for all $0<\rho$ sufficiently small.
If $\bo\in\Lambda(Q)$ we know that $\underset{\by\to \bo}{\lim} f_b (Q(\by))=\infty$ and if $\Lambda(Q)\cap E$ is nonempty, this leads to a contradiction.

\noindent
{\it Proof of Theorem 3}.
Assume $f_e$ satisfies (1.9) and (1.10) and let $Q$ be a local minimizer for $\sF$ on $B_{4\overline r}(\bo)\subset\Omega$. Assume without loss of generality that $\bo\in\Lambda (Q)$.
We associate with $f_e$ and any $V\in \overline\sM$ the quadratic form corresponding to
\[
L(V)[\nabla g]=\sum^2_{i,j=1} [L_1\ \delta_{ij}+L_5\ V_{ij}]g_{x_i}g_{x_j}.
\]
Using (1.10) we have that there exist constants $\mu_1,\mu_2>0$ so that
\begin{eqnarray*}
\mu_1 |\zeta|^2\leq L(V)[\zeta]&\leq&\mu_2 |\zeta|^2\text{ for all }\zeta\in\bR^2\\
&&\text{ and }V\in\overline\sM.
\end{eqnarray*}
Let $\by=A\bx$ be a linear nonsingular change of variables in $\bR^2$.
Given $V(\bx)\in\overline\sM$ set $\hat V(\by)=V(A^{-1}(\by))$ and define the quadratic form
\[
\hat L(\hat V(\by))[\zeta]:=L(V(\bx))[A^T\zeta]=\sum^2_{i,j=1} c_{ij}(\hat V(\by))\zeta_i\zeta_j.
\]
Here
\[
c_{ij}(\hat V(\by))=\big[L_1AA^t+A\hat W(\by)A^t\big]_{ij} \text{ for } 1\leq i,j\leq 2
\]
where $\hat W(\by)$ is the $2\times 2$ matrix whose $lk$ entry is the $lk$ entry of $\hat V(\by)$. Thus $c_{ij}(\hat V(\by))$ are affine functions of $\hat V_{lk}(\by)$ for $1\leq l,k\leq 2$.
Choose and fix $A$ so that $c_{ij}(\hat Q(\bo))=c_{ij}(Q(\bo))=\delta_{ij}$.
Note that  $c_{ij}=c_{ji}$ and $\tilde\mu_1|\zeta|^2\leq c_{ij}\zeta_i\zeta_j\leq\tilde\mu_2 |\zeta|^2$ where
$0<\tilde\mu_1\leq\tilde\mu_2$ are constants independent of $\hat V(\by)$ for $\hat V\in \overline\sM$.
Set $\sE=A [B_{\overline r} (\bo)]$ and let $r\leq \overline R$ where $\overline R$ satisfies $B_{4\overline R}(\bo)\subset\sE$.
By the chain rule, the energy $\sF$ transforms to
\[
\hat{\sF} (\hat V;{\sE})=\int_{\sE} [\hat f_e (\hat V,\nabla\hat V)+f_b (\hat V)]|\det A^{-1}|d\by
\]
where
\[
\hat f_e(\hat V,\nabla\hat V)=\sum^3_{l,k=1}\hat L(\hat V)[\nabla\hat V_{lk}]+\bb\cdot \nabla\hat V
\]
such that
\[
\bb=[b_{lkm}(\hat V)], \,\ b_{lkm}(\hat V)=\sum_{i,j=1}^2 b_{lkm}^{ij}\hat V_{ij},
\]
and $\{ b_{lkm}^{ij}\}$ are all constants that are uniformly bounded independent of $\hat V$ for $\hat V\in \sM$. For $\overline{\by}\in B_r(\bo)$ let $\sL(\overline{\by})$ be the constant coefficient scalar elliptic operator
\[
{\sL}(\overline{\by})[\hat g]=\sum^2_{i,j=1} c_{ij}(\hat Q(\overline{\by}))\hat g_{y_i y_j}.
\]
Let $0<s\leq r$. Then $B_s (\overline{\by})\subset\sE$ and we can construct the $\sL$--replacement comparison function $\hat Q_s({\by}):=\hat Q_{B_s(\overline{\by})}({\by})$ such that for each $1\leq\ell,\ k\leq 3$
\begin{equation}
{\sL}(\overline{\by})[\hat {Q_s}_{lk}]=0\qquad\text{on }B_s(\overline{\by}),
\end{equation}
\begin{equation}
\hat {Q_s}_{lk}=\hat Q_{lk}\qquad\text{ on }\partial B_s(\overline{\by}),
\end{equation}
\begin{equation*}
\hat {Q_s}_{lk}=\hat Q_{lk}\qquad\text{ on }\sE\backslash B_s (\overline{\by}).
\end{equation*}
In the following estimates the constants $M_i$ depend on $A_2$ and $\delta$ from (3.2), however they are independent of $s$ and $r$ for $0<s \leq r\leq r_0$.
Since homogeneous solutions to $\sL(\overline{\by})$ satisfy the strong maximum principle $\hat Q_s$ has the same properties as the harmonic replacements used earlier.
We use two other features of $\hat Q_s$.
Since $\hat{Q_s}_{lk}$ minimizes
\[
\int_{B_s(\overline{\by})}c_{ij}(\hat Q(\overline{\by})) g_{y_i} g_{y_j} d\by
\]
among $g\in  H^1 (B_s (\overline{\by}))$ satisfying $g=\hat Q_{lk}$ on $\partial B_s (\overline{\by})$ it follows that
\begin{equation}
\int_{B_s(\overline{\by})} |\nabla \hat Q_s|^2d\by\leq {M}_1\int_{B_s(\overline{\by})} |\nabla\hat Q|^2d\by
\end{equation}
where $M_1=M_1 (\tilde\mu_1,\tilde\mu_2)$.
Second, from the maximum principle and (3.1) we have
\begin{equation}
\underset{B_s(\overline{\by})}{osc}\ {\hat Q_{s_{ij}}}\leq \underset{\partial B_s(\overline{\by})}{osc}\  {\hat Q_{s_{ij}}}=\underset{\partial B_s(\overline{\by})}{osc}\ {\hat Q_{{ij}}}\leq {M}_2 s^\delta \text{ for each } i \text{ and } j.
\end{equation}
Just as before if $\hat Q_s|_{\partial B_s(\overline{\by})}$ is separated from $\partial\sM$ then the function $f(\hat Q_s(\overline{\by}))$ is a classical $\sL$--subsolution on $B_s(\overline{\by})$.
Set
\begin{equation*}
u(\by)={1\over 4} (|\by-\overline{\by}|^2-s^2)
\end{equation*}
 and let $w(y)$ solve
\begin{eqnarray*}
\sL(\overline{\by})[w]&=&1\qquad\text{ on }B_s(\overline{\by}),\\
w&=& 0\qquad\text{ on }\partial B_s (\overline{\by}).
\end{eqnarray*}
Then
\begin{eqnarray*}
\sL(\overline \by)[w-u]&=&1-\frac{1}{2}\sum_{i=1}^2c_{ii}(\hat Q(\overline\by))\\
&=&\frac{1}{2}\sum_{i=1}^2\big(c_{ii}(\hat Q(\bo))-c_{ii}(\hat Q(\overline\by)\big)=C
\text{ on }B_s(\overline{\by})
\end{eqnarray*}
where $C$ is  constant on $B_s(\overline{\by})$ such that
\[
|C|\leq M_3|\hat Q({\bo})-\hat Q(\overline\by)|\leq M_4 r^\delta.
\]
 Here we have used (3.1) and the formulas for the $c_{ij}$.
It follows that
\[
|\nabla w-\nabla u|\leq M_5s r^\delta\qquad\text{ on }\overline{B_s}(\overline{\by}).
\]
In particular
\[
|\nabla w(\by)-\frac{s}{2}\nu|\leq M_6 s r^\delta \text{ for }\by\in\partial B_s(\overline{\by}) \text{ and } \nu=\frac{(\by-\overline\by)}{s}.
\]
Also, since $f(\hat Q_s)$ is a subsolution for $\sL(\overline \by)$, we have
\begin{eqnarray}
\int_{B_s(\overline{\by})} f(\hat Q_s)d\by&=&\int_{B_s(\overline{\by})} f(\hat Q_s)\sL (\overline{\by})[w]d\by\\
&\leq&\int_{\partial B_s (\overline{\by})} f(\hat Q_s)\nu_\ell c_{lk}(\hat Q(\overline{\by}))w_{y_k}d\bs.\nonumber
\end{eqnarray}
We need to express this in terms of $f_b$. In order to do this set
\[
\mathfrak{I}=\int_{\partial B_s (\overline{\by})} (-\kappa|\hat Q_s|^2+b_0)\nu_\ell c_{lk}(\hat Q(\overline{\by}))w_{y_k}d\bs.
\]
Since $c_{lk}( Q)$ is a fixed affine function such that $c_{lk}( Q(\bo))=\delta_{lk}$ it follows that

\begin{equation}
|\nu_\ell c_{lk}(\hat Q(\overline{\by}))w_{y_k}-\frac{s}{2}|\leq M_7 sr^\delta.
\end{equation}
Adding $\mathfrak{I}$ to the right side of (3.7) then gives
\begin{equation}
\int_{\partial B_s (\overline{\by})} f_b(\hat Q_s)\nu_\ell c_{lk}(\hat Q(\overline{\by}))w_{y_k}d\bs\leq
s(\frac{1}{2}+M_{8}r^\delta)\int_{\partial B_s (\overline{\by})} f_b(\hat Q_s)d\bs.
\end{equation}
Also from (3.8)  we get
\[
|\mathfrak{I}-{s\over 2}\int_{\partial B_s}(-\kappa|\hat Q_s|^2+b_0)\,d\bs|\leq M_{9} s^2r^\delta.
\]

Next, using the fact that $\ds\int_{B_s} c\, d\by={s\over 2}\int_{\partial B_s} c\,d\bs$ if $c$ is a constant
and using  (3.6) we have the estimate
\[
|\mathfrak{I}-\int_{B_s (\overline{\by})} (-\kappa|\hat Q_s|^2+b_0)d\by|\leq M_{10}(s^{2+\delta}+s^2r^\delta)\leq M_{11} s^2r^\delta.
\]
Thus adding $\mathfrak{I}$ to the left side of (3.7) together with (3.9) gives
\[
\int_{B_s(\overline\by)} f_b (\hat Q_s)d\by\leq s({1\over 2}+M_8 r^\delta)\int_{\partial B_s (\overline{\by})} f_b (\hat Q)d\bs+M_{12} s^2 r^\delta.
\]
Just as before we can drop the assumption that $\hat Q|_{\partial B_s(\overline{\by})}$ is separated from $\partial\sM$ and replace it with the condition that $ f_b(\hat Q)\in L^1(\partial B_s (\overline{\by}))$.
Also as in Section 2, using $\hat Q_s$ as a comparison function and the above inequality we get
\begin{eqnarray*}
\int_{B_s(\overline{\by})} [\hat f_e (\hat Q,\nabla\hat Q)&+&f_b(\hat Q)]d\by\leq\int_{B_s(\overline{\by})} \hat f_e(\hat Q_s,\nabla\hat Q_s)d\by\\
&+& s({1\over 2}+M_8 r^\delta)\int_{\partial B_s(\overline{\by})}f_b(\hat Q)d\bs\\
&+& M_{12} s^2 r^\delta.
\end{eqnarray*}
We rewrite this as
\begin{eqnarray}
&&\int_{B_s(\overline{\by})} [\hat f_e (\hat Q(\overline{\by}),\nabla\hat Q)-\hat f_e (\hat Q(\overline{\by}),\nabla\hat Q_s)]d\by\\
&\leq&\int_{B_s(\overline{\by})} [(\hat f_e (\hat Q(\overline{\by}),\nabla\hat Q)-\hat f_e (\hat Q,\nabla\hat Q))\nonumber\\
&+& (\hat f_e (\hat Q_s,\nabla \hat Q_s)-\hat f_e (\hat Q(\overline{\by}),\nabla\hat Q_s))]d\by\nonumber\\
&-&\int_{B_s(\overline{\by})} f_b(\hat Q)d\by+s({1\over 2}+M_8 r^\delta)\int_{\partial B_s(\overline{\by})} f_b(\hat Q)d\bs+M_{12} s^2 r^\delta\nonumber
\end{eqnarray}
The left side of (3.10) equals
\begin{eqnarray}
&&\int_{B_s(\overline{\by})} c_{ij}(\hat Q(\overline{\by}) [\hat Q_{lk,y_i}\hat Q_{lk,y_j}-\hat Q_{slk,y_i}\hat Q_{slk,y_j}]d\by\\
&+&\int_{B_s(\overline{\by})}\bb (\hat Q(\overline{\by}))\cdot\nabla (\hat Q-\hat Q_s)d\by.\nonumber
\end{eqnarray}
Using (3.3) and (3.4) the first integral in (3.11) equals and satisfies
\begin{eqnarray*}
&&\int_{B_s(\overline{\by})}c_{ij}(\hat Q(\overline{\by}))(\hat Q_{lk,y_i}-\hat Q_{slk,y_i})(\hat Q_{lk,y_j}+\hat Q_{slk,y_j})d\by\\
&=&\int_{B_s(\overline{\by})}c_{ij}(\hat Q(\overline{\by}))(\hat Q_{lk,y_i}-\hat Q_{slk,y_i})(\hat Q_{lk,y_j}-\hat Q_{slk,y_j})d\by\\
&\geq&\tilde\mu_1\int_{B_s(\overline{\by})}|\nabla (\hat Q-\hat Q_s)|^2d\by.
\end{eqnarray*}
For the second integral in (3.11), note that $\bb(\hat Q(\overline{\by}))$ is constant and $\hat Q=\hat Q_s$ on $\partial B_s(\overline{\by})$.
Thus this integral is zero.

The first integral on the right side of (3.10) can be estimated using (3.1),(3.2),(3.5),(3.6) and the formula for $\hat f_e$.
We find that it is bounded by
\begin{eqnarray*}
&&M_{13}\Big( s^\delta\int_{B_s(\overline{\by})} [ |\nabla\hat Q|^2 + |\nabla\hat Q_s|^2]d\by+s^{1+\delta}(\int_{B_s(\overline{\by})}[|\nabla\hat Q|^2+|\nabla \hat Q_s|^2]d\by)^{1/2}\Big)\\
&&\leq M_{14} s^{3\delta}.
\end{eqnarray*}
We arrive at
\begin{eqnarray}
\tilde\mu_{1}\int_{B_s(\overline{\by})}&&|\nabla (\hat Q-\hat Q_s)|^2d\by\leq M _{15} (s^{3\delta}+s^2 r^\delta)\\
&-&\int_{B_s(\overline{\by})} f_b (\hat Q)d\by+s({1\over 2}+M_8 r^\delta)\int_{\partial B_s(\overline{\by})} f_b(\hat Q)d\bs.\nonumber
\end{eqnarray}
Thus we have
\begin{eqnarray}
\tilde\mu_1 \int_{B_s(\overline{\by})} &&|\nabla (\hat Q-\hat Q_s)|^2d\by\leq M_{15} (s^{3\delta}+s^2 r^\delta)\\
&-&\int_{B_s(\overline{\by})} f_b (\hat Q)d\by+{s\over m}\int_{\partial B_s(\overline{\by})} f_b(\hat Q)d\bs.\nonumber
\end{eqnarray}
Here $m=m(r_0)$ such that $0<m<2$ and can be taken as close to 2 as desired provided $r_0$ is taken sufficiently small to begin with.
Now set
\[
\gamma(s)=\int_{B_s(\overline{\by})} f_b (\hat Q)d\by.
\]
From (3.13) we have
\[
- M_{15} (s^{3\delta}+ s^2 r^\delta)\leq -\gamma (s)+{s\over m}\gamma' (s)
\]
for almost every $0<s\leq r$. In what follows the constants $C_i$ will depend on $r, \delta,$ and $\eta$.
Let $0<\eta<\text{min}(3\delta,m)$.
Dividing by $s^{1+\eta}$ and using the fact that $\gamma'(s)\geq 0$ we find

\[-M_{16}s^{\text{min}(3\delta,m)-\eta-1}\leq\big({\gamma(s)\over s^\eta}\big)'.
\]
Since $\text{min}(3\delta,m) >\eta$ we can integrate this to obtain
\[
{\gamma(s)\over s^\eta}\leq C_1\qquad\text{for }0<s\leq r.
\]
Thus
\[
\int_{B_s(\overline{\by})} f_b(\hat Q)d\by\leq C_1 s^\eta\text{ for } |\overline{\by}|\leq r, s\leq r,
\]
for $\eta<\min(3\delta, m)$.

We next get a corresponding, improved estimate for $|\nabla\hat Q|^2$.
Since $\gamma(s)$ is absolutely continuous on $[0,r]$, given $0<t\leq r$ we can select $s_0$ so that ${t\over 2}\leq s_0\leq t$ and $t\gamma' (s_0)\leq 2\gamma(t)$.
Then from (3.13) we get
\begin{equation}
\int_{B_{s_0}(\overline{\by})} |\nabla (\hat Q-\hat Q_{s_0}|^2d\by\leq C_2 (t^{3\delta} + t^\eta)\leq C_3 t^\eta.
\end{equation}
Since $\hat Q_{s_0}$ solves (3.3) and (3.4) we have that
\[
\int_{B_\rho(\overline{\by})} |\nabla\hat Q_{s_0}|^2d\by\leq C_4 ({\rho\over s_0})^2\int_{B_{s_0}(\overline{\by})}|\nabla\hat Q_{s_0}|^2d\by\text{ for }\rho\leq s_0.
\]
(See \cite{G}; Ch.~III.)
Thus using (3.5) and (3.14)
\[
\int_{B_\rho(\overline{\by})} |\nabla\hat Q|^2d\by\leq C_5 ({\rho\over s_0})^2\int_{B_{s_0}(\overline{\by})} |\nabla\hat Q|^2d\by+C_6 t^\eta\text{ for }\rho<s_0,
\]
and it follows that
\[
\int_{B_\rho(\overline{\by})} |\nabla \hat Q|^2d\by\leq C_7 ({\rho\over t})^2\int_{B_t(\overline{\by})}|\nabla\hat Q|^2d\by+ C_8 t^\eta
\]
for $0<\rho\leq t\leq r$, $|y|\leq r$, and $r\leq r_0$.
Then from \cite{G}; Ch.~III, Lemma 2.1 we find that
\[
\int_{B_t(\overline{\by})} |\nabla\hat Q|^2d\by\leq C_9 t^\eta\text{ for } |\overline{\by}|\leq r,\ t\leq r.
\]

Given $0<\theta<1$ set $\delta_1=\min ({5\delta\over 4},\theta)$.
We have strengthened (3.2) by showing that the inequality holds with $\delta$ replaced by $\delta_1$.
The argument can be repeated a finite number of times to obtain the estimate with $\delta$ replaced by $\theta$.
We return to (3.12) setting $\overline{\by}=\bo,s=r$, and $\delta=\theta$.
 Using the definition of $\gamma(r)$ we find that there exists $r_1>0$ so that
\begin{equation}
-2M_{15} r^{3\theta}\leq -\gamma (r)+({r\over 2}+M_8 r^{1+\theta})\gamma' (r)
\end{equation}
for almost every $0<r\leq r_1$.
We need $\theta>{2\over 3}$ and set $\theta={3\over 4}$.
Multiplying (3.15) by the integrating factor $2r^{-3} (1+2M_8 r^{3/4})^{5/3}$ we obtain
\[
-M_{16} r^{-{3\over 4}}\leq \big[r^{-2} (1+2M_8 r^{3/4})^{8/3}\gamma (r)\big]'.
\]
Integrating from $r$ to $r_1$ we get
\[
r^{-2}\int_{B_r(\bo)} f_b (\hat Q)d\by=r^{-2}\gamma (r)\leq C = C(r_1)
\]
for $0<r\leq r_1$.
We assumed that $\hat Q(\bo)\in\partial\sM$ implying that $\ds\lim\limits_{{\by}\to \bo}\ f_b (\hat Q({\by}))=\infty$ which is impossible.\qed

We can use the harmonic replacement method  to give a short proof of a local version of the Ball--Majumdar result.
We do this here in a slightly more general setting.
Let $\sK\subset \bR^m$ be an open, bounded, convex set where $m\geq1$.
Consider $f\colon \sK\to\bR$ such that $f(\bp)=\tilde f(\bp)-\kappa|\bp|^2$ where $\kappa \geq 0$, $\tilde f\in
C^\infty(\sK)$, convex, and such that $\lim\limits_{\bp\to\partial \sK}\tilde f(\bp)=\infty$.

\begin{theorem}Let $\Omega$ be a domain in $\bR^n$ for $n\geq1$, $\gamma>0$, and let $u(\bx)\in
W^{1,2}(\Omega;\overline \sK)$ be a local minimizer for
\begin{equation*}
\sF_1(w)=\int_\Omega [\gamma|\nabla w|^2+f(w)]\ d\bx
\end{equation*}
such that $\sF_1(u)<\infty$.
Then $u\in C^\infty(\Omega)$  and $u(\bx)\in \sK$ for each $\bx\in\Omega$.
\end{theorem}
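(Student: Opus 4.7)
My plan is to adapt the harmonic-replacement method of Section~2 to prove a sharp volume-density bound on $\tilde f(u)$, then upgrade via Lebesgue differentiation and elliptic regularity. Fix $\bx_0 \in \Omega$ and $R > 0$ with $\overline{B_R(\bx_0)} \subset \Omega$, and for $0 < r \leq R$ let $h$ be the componentwise harmonic replacement of $u$ on $B_r(\bx_0)$. Convexity of $\sK$ together with the maximum principle applied to each linear functional $e \cdot h$ (with $|e| = 1$) shows $h(\bx) \in \overline\sK$ on $B_r$, so $h$ is an admissible competitor in $W^{1,2}(B_r;\overline\sK)$.

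From the local minimality $\sF_1(u; B_r) \leq \sF_1(h; B_r)$, the orthogonality identity $\int_{B_r}(|\nabla u|^2 - |\nabla h|^2)\,d\bx = \int_{B_r}|\nabla(u-h)|^2\,d\bx$, and the splitting $f = \tilde f - \kappa|\cdot|^2$, I obtain
\[
\int_{B_r}\tilde f(u)\,d\bx + \gamma\int_{B_r}|\nabla(u-h)|^2\,d\bx \leq \int_{B_r}\tilde f(h)\,d\bx + \kappa\int_{B_r}(|u|^2 - |h|^2)\,d\bx.
\]
Writing $|u|^2 - |h|^2 = (u-h)\cdot(u+h)$, using the uniform bound $|u|,|h| \leq M_0 := \sup_{\overline\sK}|\bp|$, Poincar\'e on $u - h \in H^1_0(B_r;\bR^m)$, and Young's inequality, I absorb the non-convex term into the Dirichlet slack to get $\int_{B_r}\tilde f(u)\,d\bx \leq \int_{B_r}\tilde f(h)\,d\bx + C r^{n+2}$. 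Convexity of $\tilde f$ and harmonicity of each component of $h$ make $\tilde f(h)$ subharmonic on $B_r$ (justified by the $\tau h$ approximation for $\tau \uparrow 1$, WLOG $0 \in \sK$ so $\tau h \in \sK$ strictly, exactly as in the derivations of (2.4) and (2.7)), so the $n$-dimensional mean-value inequality for subharmonic functions gives $\int_{B_r}\tilde f(h)\,d\bx \leq (r/n)\int_{\partial B_r}\tilde f(u)\,d\mathcal H^{n-1}$ for a.e.\ $r \in (0,R]$. Setting $\phi(r) := \int_{B_r(\bx_0)}[\tilde f(u) + C_f]\,d\bx \geq 0$ with $C_f := -\inf_\sK \tilde f$, the combined estimate reads $\phi(r) \leq (r/n)\phi'(r) + C r^{n+2}$ for a.e.\ $r$, equivalent to $(\phi(r)/r^n)' \geq -C n r$. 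Integrating from $r$ to $R$ yields the sharp density bound
\[
\phi(r) \leq C_1 r^n \qquad \text{for all } 0 < r \leq R,
\]
with $C_1$ depending only on $\phi(R)$, $R$, $\gamma$, $\kappa$, and $\mathrm{diam}(\sK)$.

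To conclude, I apply this bound uniformly as $\bx_0$ varies over a compact $V \Subset \Omega$ and invoke Lebesgue differentiation to deduce $\tilde f(u(\bx)) \leq C_1/\omega_n - C_f$ at a.e.\ $\bx \in V$. Thus $u$ takes values a.e.\ in a compact subset $K \subset \sK$, so $\nabla f(u) \in L^\infty_{loc}(\Omega)$. Since $K$ is compactly contained in the open set $\sK$, small perturbations $u + s\varphi$ (small $s$, $\varphi \in C^\infty_c(\Omega;\bR^m)$) remain admissible and give the weak Euler--Lagrange system $-2\gamma \Delta u = -\nabla f(u)$ on $\Omega$. Standard elliptic regularity applied componentwise produces $u \in W^{2,p}_{loc}$ for all $p$, hence $u \in C^{1,\alpha}_{loc}$, and iterated Schauder bootstrap upgrades to $u \in C^\infty(\Omega)$. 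Continuity combined with $u \in \sK$ a.e.\ and $\lim_{\bp \to \partial\sK}\tilde f(\bp) = +\infty$ then forces $u(\bx) \in \sK$ for every $\bx \in \Omega$.

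The main technical obstacle is producing the error $O(r^{n+2})$ rather than $O(r^n)$ in the fundamental energy inequality; without absorbing the $\kappa$-term using Poincar\'e and Young, the ODE analysis would yield only $\phi(r) = O(r^n \log(1/r))$, too weak to rule out $\tilde f(u)$ being unbounded. The $\tau \uparrow 1$ approximation needed to secure subharmonicity of $\tilde f(h)$ when $h$ may touch $\partial\sK$ is a secondary but standard point, handled exactly as in Section~2.
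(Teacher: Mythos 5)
Your proposal is correct and takes essentially the same route as the paper's own proof: componentwise harmonic replacement, maximum principle plus convexity to keep the replacement valued in $\overline\sK$, absorption of the $\kappa$-term by Poincar\'e and Young to get the $O(r^{n+2})$ error, subharmonicity of $\tilde f$ composed with the harmonic competitor via the $\tau\uparrow 1$ approximation, the resulting differential inequality giving the density bound $\phi(r)\leq C_1 r^n$, Lebesgue differentiation to place $u$ a.e.\ in a compact subset of $\sK$, and then the Euler--Lagrange system and elliptic bootstrap. The only differences are expository (you spell out the ODE integration and the $W^{2,p}\to C^{1,\alpha}\to C^\infty$ chain more explicitly than the paper does), not mathematical.
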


\begin{proof}Let $B=B_r(\bx_0)\subset\Omega$ and let $u_B(\bx)$ be defined to have harmonic components and satisfy $u_B=u$ on $\partial B$.
It follows from  the maximum principle and the fact that $\sK$ is convex that  $u_B(\bx)\in\overline
\sK$ for $\bx\in B$.
We can then compare $u$ and $u_B$,
\begin{equation}
\int_B [\gamma |\nabla u|^2+f(u)]\ d\bx\leq\int_B[\gamma|\nabla u_B|^2+f(u_B)]\ d\bx.
\end{equation}
Since $u_B$ is harmonic we have that
\begin{equation*}
\gamma\int_B |\nabla (u-u_B)|^2 d\bx=\gamma\int_B (|\nabla u|^2-|\nabla u_B|^2) d\bx.
\end{equation*}
Using the fact that $\sK$ is  bounded and the Poincar\'e and H\"{o}lder inequalities we have that
\begin{eqnarray*}
&&\int_B \kappa| |u|^2-|u_B|^2| d\bx\leq C_0\int_B |u-u_B|d\bx\\
&&\leq C_1 r\int_B |\nabla (u-u_B)| d\bx\\
&\leq& {\gamma\over2}\ \int_B |\nabla (u-u_B)|^2 d\bx+ C_2 r^{n+2}.
\end{eqnarray*}
Next, just as in the proof for Theorem 1 we have that
\[
\int_B \tilde f(u_B) d\bx\leq {r\over n}\ \int_{\partial B}\tilde f(u) d\bs.
\]
Combining these facts with (3.16) we get
\[
-C_3 r\leq {d\over dr} (r^{-n}\int_{B_r(\bx_0)}\tilde f(u) d\bs).
\]
Given $\delta>0$ set $\Omega_\delta=\{\bx\in\Omega\colon$ dist$(\bx,\partial\Omega)\geq \delta\}$.
Then just as in the proof of Theorem 3 we find that for $\bx\in\Omega_\delta$ and $r<\delta/2$ that
\[
|B_r(\bx)|^{-1} \int_{B_r(\bx)}\tilde f(u) d\bx \leq C_4 < \infty
\]
where $C_4=C_4(\delta,\int_\Omega \tilde f (u)\,d\bx)$.
It follows that $\tilde f(u(\bx))\leq C_4$ almost everywhere on $\Omega_\delta$.
From the structure of $\tilde f$ we see that there is a $\beta>0$, $\beta=\beta(C_4)$ so that
\[
u(\bx)\in \sK_\beta=\{\bp\in \sK\colon\text{ dist}(\bp,\partial \sK)\geq\beta\}
\]
for almost every $\bx\in\Omega_\delta$.
Due to this we can take smooth variations with compact support about $u$ for $\sF_1$ and as a consequence  $u$ satisfies
 the energy's Euler--Lagrange
equation.
Since $f(\cdot)\in C^\infty(\sK)$  it follows that $\Delta u\in L^\infty_{\loc} (\Omega)$ and we have
that $u\in C^{1,\alpha}(\Omega)$ for $0<\alpha<1$. Higher regularity follows from elliptic estimates.
\end{proof}

 We can apply Theorem 4 to tensor-valued functions as in Section 1.

 \begin{corr} Let $D$ be a bounded domain in $\mathbb{R}^3$ and let $Q\in H^1(D;\overline\sM)$ be a local minimizer for $\sF(V;D)=\int_D[L_1|\nabla V|^2 +f_b(V)] d\bx$  such that $\sF(Q;D)<\infty$. Then $Q\in C^\infty(D)$ and $Q(\bx)\in\sM$ for each $\bx\in D$ .
 \end{corr}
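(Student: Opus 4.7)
My plan is to deduce Corollary 3 by running the argument of Theorem 4 directly on the tensor-valued minimizer $Q$. Under the coordinatization $\bz = (Q_{11}, Q_{12}, Q_{13}, Q_{22}, Q_{23})$ from (1.6), $\sS_0 \cong \bR^5$ and $\sM$ corresponds to an open, bounded, convex subset of $\bR^5$, while $f_b(Q) = f(Q) - \kappa|Q|^2 + b_0$ has precisely the structure of the bulk term in Theorem 4 with $f$ playing the role of $\tilde f$. The only mismatch with the literal hypothesis of Theorem 4 is that $L_1|\nabla Q|^2 = L_1\sum_{i,j}|\nabla Q_{ij}|^2$ is a positive-definite quadratic form in $\nabla\bz$ rather than a scalar multiple of $|\nabla\bz|^2$; I will verify that this does not affect any step.

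First, for each open ball $B = B_r(\bx_0) \subset D$ I will take as comparison function the componentwise harmonic replacement $Q_B$ from Definition 2.1. Lemma 2.1, whose proof uses only the strong maximum principle for the harmonic function $\zeta^t Q_B(\bx)\zeta$ and is therefore independent of the ambient dimension, guarantees $Q_B \in \overline{\sM}$ on $B$. Because $L_1|\nabla Q|^2$ splits into a sum of scalar Dirichlet integrals over the independent entries of $Q$, each of which is separately minimized by its harmonic replacement, one has $\int_B L_1|\nabla Q_B|^2\, d\bx \leq \int_B L_1|\nabla Q|^2\, d\bx$. Testing minimality of $Q$ against $Q_B$ then gives the tensor-valued analogue of (3.16). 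The remainder of the argument in the proof of Theorem 4 now transfers unchanged: convexity of $f$ combined with harmonicity of $Q_B$ makes $f(Q_B)$ subharmonic on $B$ (with the usual $\tau Q_B$, $\tau \uparrow 1$ regularization used in Theorem 1 handling the case where $Q_B$ touches $\partial\sM$), whence Green's identity with the barrier $v(\bx) = (|\bx-\bx_0|^2 - r^2)/(2n)$ yields $\int_B f(Q_B)\, d\bx \leq (r/n)\int_{\partial B} f(Q)\, d\bs$; the $-\kappa|Q|^2$ term is controlled by Poincar\'e exactly as in Theorem 4. Combining these gives the differential inequality $-C_3 r \leq \frac{d}{dr}\bigl(r^{-n}\int_{B_r(\bx_0)} f(Q)\, d\bx\bigr)$.

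Integrating in $r$ then produces the bound $r^{-n}\int_{B_r(\bx_0)} f(Q)\, d\bx \leq C_4$ uniformly for $\bx_0$ in any compact subset of $D$ and all small $r$, so by Lebesgue differentiation $f(Q(\bx)) \leq C_4$ almost everywhere on that subset. The blow-up of $f$ at $\partial\sM$ then forces $Q$ to remain at a uniform positive distance $\beta = \beta(C_4)$ from $\partial\sM$ on compact subsets of $D$; in particular $\Lambda(Q) \cap D = \emptyset$, and smooth compactly supported variations in $\sS_0$ become admissible. Therefore $Q$ satisfies the Euler--Lagrange equation of $\sF(\cdot;D)$, whose nonlinearity is smooth on the range of $Q$, and standard elliptic bootstrapping gives $Q \in C^\infty(D)$. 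The one point that requires a moment's verification rather than a direct quotation is the dimension-independence of Lemma 2.1, but this is purely notational since its proof relies only on the strong maximum principle.
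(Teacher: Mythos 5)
Your proof is correct, but you've made it harder on yourself than the paper does. The paper's proof of Corollary 3 sidesteps the ``mismatch'' you identify by choosing an \emph{orthonormal} basis $\{E_1,\dots,E_5\}$ of $\sS_0$ (with respect to the Frobenius inner product) and writing $W(u)=\sum_i u_i E_i$. With that choice $|\nabla Q|^2 = |\nabla u|^2$ exactly and $|Q|^2 = |u|^2$ exactly, so the energy becomes $\int_D[L_1|\nabla u|^2 + f_b(W(u))]\,d\bx$, literally the functional of Theorem~4 on the bounded open convex set $\sK = \{u : W(u)\in\sM\}$. The corollary is then an immediate quotation of Theorem~4 with no re-derivation.

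You instead use the non-orthonormal coordinatization (1.6), observe that $L_1|\nabla Q|^2$ is no longer a scalar multiple of $|\nabla\bz|^2$, and re-run the argument of Theorem~4 component-wise on $Q$. This works, and your reasons are right: harmonic replacement commutes with the linear parametrization of $\sS_0$, Lemma~2.1 is dimension-independent, and $\int_B L_1|\nabla Q_B|^2 \le \int_B L_1|\nabla Q|^2$ because the Frobenius Dirichlet energy splits over the nine entries $Q_{ij}$, each of which is separately harmonic after replacement (your phrase ``over the independent entries'' is slightly off, since $|\nabla Q|^2$ written in the five $z^i$'s has a cross term $2\nabla z^1\cdot\nabla z^4$; the clean splitting is over all nine $Q_{ij}$). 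One further point you gloss over: in coordinates (1.6), $-\kappa|Q(\bz)|^2$ is a general negative-definite quadratic form in $\bz$, not $-\kappa|\bz|^2$; this still fits the hypothesis of Theorem~4 because any positive-definite quadratic $q(\bz)$ satisfies $q(\bz)\le C|\bz|^2$, so one can absorb $\kappa(C|\bz|^2-q(\bz))$ into $\tilde f$ while keeping it convex, at the price of a larger $\kappa$. That verification is routine but should be stated. The orthonormal-basis trick makes all of this disappear, which is why the paper chose it.
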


 \begin{proof} Let $\{E_1,\dots,E_5\}$ be an orthonormal basis for $\sS_0$ and parameterize $W\in \sS_0$ by $W(u)=\sum_{i=1}^5u_iE_i \text{ for } u\in\mathbb{R}^5$. Then  $\sK:=\{u\in\mathbb{R}^5:W(u)\in\sM\}$ is a bounded, open, and convex. For $V\in H^1(D;\overline\sM)$ we can write
  \[L_1|\nabla V(\bx)|^2+f_b(V(\bx))= L_1|\nabla u(\bx)|^2+f_b(V(u(\bx)))\]

 and the assertions follow from Theorem 4.
 \end{proof}

\appendix\section{   Appendix}

In this section we derive a reverse H\"older inequality that when combined with the proof from \cite{G};  Ch.~ VI, Proposition 3.1  demonstrates that local minimizers are regular near $\partial\Omega\cap\Lambda (Q)$.
Let $\mathbf x_0\in\partial\Omega\backslash\Lambda (Q)$.
Fix $r>0$ so that $\overline{B_{4r}}(\mathbf x_0)\cap\Lambda(Q)=\emptyset$ and take $r$ sufficiently small so that $B_{4r} (\mathbf x_0)\cap\Omega$ is diffeomorphic to a half--disk.
From what has been proved we can assume that $Q=Q(\bold z)$ as in (1.6), where $\bold z\in C^2 (B_{2r} (\mathbf x_0)\cap\Omega)\cap C^\sigma ({B_{2r}}(\mathbf x_0)\cap\overline\Omega)\cap H^1 (B_{2r}
(\mathbf x_0)\cap\Omega)$ satisfying the Euler--Lagrange equation for $\sF'(\cdot)$,
\begin{eqnarray}
 &&\partial_{x_k}(A_{ij}^{lk}(\mathbf z)\partial_{x_l} z^i)+B^{kl}_{imj}(\mathbf z)\partial_{x_l} z^i \partial_{x_k} z^m+ C^l_{ij} (\mathbf z)\partial_{x_l} z^i\\
&+& D_j (\mathbf z)=0\quad\text{ in }\quad B_{2r} (\mathbf x_0)\cap\Omega,\nonumber
\end{eqnarray}
with boundary condition
\begin{equation*}
\mathbf z=\mathbf z_0\qquad\text{ on }\quad B_{2r} (\mathbf x_0)\cap\partial\Omega,
\end{equation*}
here the coefficients are $C^1$ functions on a neighborhood $\sK$ of the range of $\mathbf z|_{B_{2r}(\mathbf x_0)\cap\Omega}$, such that
\[
\|\mathbf A\|_{C^1(\overline\sK)},\ \|\mathbf B\|_{C^1(\overline\sK)},\ \|\mathbf C\|_{C^1(\overline\sK)},\ \|\mathbf D\|_{C^1(\overline\sK)}\leq M < \infty,
\]
for $\lambda>0$ we assume that
\begin{equation*}
A_{ij}^{lk} (\mathbf z)\zeta^i_l\zeta^j_k\geq\lambda |\zeta|^2\text{ for }\zeta\in\bR^{5\times 2}\text{ and }\mathbf z\in \overline\sK,
\end{equation*}
and $\mathbf z_0\in C^2 (\partial\Omega)$.
Set
\[
\delta=\sup \|\mathbf z-\mathbf z_0\|_{C(B_{2r}(\mathbf x_0)\cap\Omega)}.
\]
We prove

\noindent
\begin{prop}
There exist constants $\delta_0$, $C>0$ depending on $\lambda,M,\mathbf z_0$, and $\Omega$ so that if $\delta\leq\delta_0$ then $D^2\mathbf z$, $|D\mathbf z|^2\in L^2(B_r (\mathbf x_0)\cap\Omega)$ and for $0<\rho\leq {r\over 2}$
\begin{eqnarray*}
{\int\!\!\!\!\!\!-}_{B_\rho(\mathbf x_0)\cap\Omega} [1+ |D^2\mathbf z| + |D\mathbf z|^2]^2 d\bx
\leq C\bigg( {\int\!\!\!\!\!\!-}_{B_{2\rho}(\mathbf x_0)\cap\Omega} [1+ |D^2\mathbf z|+|D\mathbf z|^2]\ d\bx\bigg)^2.
\end{eqnarray*}
\end{prop}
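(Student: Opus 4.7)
The plan has three main stages: flatten the boundary, establish $H^2$-regularity of $\mathbf z$ near $B_{2r}(\mathbf x_0)\cap\partial\Omega$ by tangential difference quotients, and then derive a Caccioppoli inequality for $D^2\mathbf z$ which combined with a Sobolev--Poincar\'e estimate yields the reverse H\"older inequality. The smallness hypothesis $\delta\le\delta_0$ will be exploited throughout to absorb the critical quadratic term $\mathbf B(\mathbf z)|D\mathbf z|^2$.

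First, since $B_{2r}(\mathbf x_0)\cap\Omega$ is $C^2$-diffeomorphic to a half-disk, I would pull (A.1) back to a standard upper half-disk $B^+_{2r}$, sending $B_{2r}(\mathbf x_0)\cap\partial\Omega$ to the flat segment $\{x_2=0\}\cap B_{2r}$. The transformed equation retains its structure (uniformly elliptic divergence principal part with $C^1$ coefficients, a quadratic gradient term, a linear gradient term, a bounded zero-order term) with the same constitutive bounds up to factors depending only on the diffeomorphism. Extending $\mathbf z_0$ as a $C^2$ function in a neighborhood and setting $\mathbf v = \mathbf z - \mathbf z_0$, one has $\mathbf v=0$ on $\{x_2=0\}\cap B_{2r}$ and $\|\mathbf v\|_{C^0}\le\delta$.

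Next I would obtain $\mathbf z\in H^2(B^+_{3r/2})$ by the tangential difference quotient method. For $\eta\in C_c^\infty(B_{2r})$ the test function $\varphi = D_1^{-h}(\eta^2 D_1^h \mathbf v)$ vanishes on $\{x_2=0\}$ and is admissible in the weak form of (A.1). Ellipticity of $\mathbf A$ yields
\[
\lambda \int \eta^2 |D_1^h D\mathbf v|^2\, d\mathbf x \;\leq\; C\int |D\eta|^2 |D_1^h\mathbf v|^2\, d\mathbf x + \mathcal R,
\]
where $\mathcal R$ collects contributions from $\mathbf B$, $\mathbf C$, $\mathbf D$, and the commutators coming from the $\mathbf z$-dependence of $\mathbf A$. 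The only term not already of lower order is the $\mathbf B$-contribution; one integration by parts transfers a derivative onto $\varphi$, after which $\|\mathbf v\|_{C^0}\le\delta$ bounds it by $C\delta\int\eta^2|D_1^h D\mathbf v|^2 + \mathrm{l.o.t.}$ and allows absorption into the left-hand side once $\delta_0$ is small. Sending $h\to 0$ gives $\partial_1 D\mathbf v\in L^2_{\mathrm{loc}}$, and algebraically solving (A.1) for $\partial_2^2\mathbf v$ in terms of the remaining second derivatives and lower-order data completes the $H^2$-regularity.

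With genuine second derivatives in hand, I would differentiate (A.1) tangentially and test against $\eta^2\partial_1\mathbf v$, for $\eta\equiv 1$ on $B_\rho$, $\mathrm{supp}\,\eta\subset B_{2\rho}$, $|\nabla\eta|\le C/\rho$. The same absorption step produces the Caccioppoli bound
\[
\int_{B^+_\rho}|D^2\mathbf z|^2\, d\mathbf x \;\leq\; \frac{C}{\rho^2}\int_{B^+_{2\rho}}|D\mathbf v|^2\, d\mathbf x + C\int_{B^+_{2\rho}}(1 + |D\mathbf z|^4)\, d\mathbf x,
\]
with the quartic term again absorbed for $\delta_0$ small. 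In two dimensions, the Sobolev--Poincar\'e inequality applied to $D\mathbf v$ (whose tangential trace vanishes on the flat boundary) converts the right-hand side into the square of an $L^1$-average of $|D^2\mathbf v|+\rho^{-1}|D\mathbf v|$, and an iteration on nested half-disks together with $\|\mathbf z_0\|_{C^2}$ replaces $\mathbf v$ by $\mathbf z$ up to the admissible constant $1$, giving the stated estimate. The main obstacle is, as anticipated, handling the critical-growth term $\mathbf B(\mathbf z)|D\mathbf z|^2$: it must be absorbed once to secure $H^2$ and once again in the Caccioppoli step, and in each case one must carefully control the boundary terms arising along $\{x_2=0\}$ when integrating it by parts, where only $\partial_2\mathbf v$ survives and is managed via the $C^2$-regularity of $\mathbf z_0$.
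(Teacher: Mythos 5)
Your overall architecture matches the paper's: flatten to a half-disk, use tangential difference quotients $D_1^{-h}(\eta^k D_1^h\mathbf v)$ for the first $H^2$-estimate, solve (A.1) algebraically for $\partial_{x_2}^2\mathbf z$, then combine with Sobolev--Poincar\'e to get the reverse H\"older bound. The gap is in your treatment of the quartic gradient term. You write a Caccioppoli inequality with $\int|D\mathbf z|^4$ on the right and claim it is ``absorbed for $\delta_0$ small,'' but there is no mechanism for absorbing $\int|D\mathbf z|^4$ into $\int|D^2\mathbf z|^2$: these are not comparable quantities and smallness of $\|\mathbf z-\mathbf z_0\|_{C^0}$ does not put a small coefficient in front of the quartic term in that inequality. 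The same issue already blocks your ``algebraically solve for $\partial_{x_2}^2\mathbf v$'' step: from (A.1) one gets $|\partial_{x_2}^2\mathbf z|^2\le C(|\partial_{x_2}\mathbf z|^4+|\partial_{x_1}D\mathbf z|^2+|\partial_{x_1}\mathbf z|^4+1)$, so $\partial_{x_2}^2\mathbf z\in L^2$ requires $D\mathbf z\in L^4$, which you have not established -- the tangential difference-quotient test function alone delivers $\partial_{x_1}D\mathbf z\in L^2$ but not $D\mathbf z\in L^4$. Note that the Proposition's conclusion itself demands $|D\mathbf z|^2\in L^2$, so this is not a side issue.

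The paper closes this gap with two additional test functions that you do not have. It tests the equation against $(\mathbf z-\mathbf z_0)|\Delta_1^h(\mathbf z-\mathbf z_0)|^2\eta^4$ (to produce control of $\int|\partial_{x_1}\mathbf z|^4\eta^4$ after $h\to 0$) and, separately, against $(\mathbf z-\mathbf z_0)|\partial_{x_2}\mathbf z|^2\eta^4$ on the domains $B^+_{\varepsilon,2r}$ (to produce $\int|\partial_{x_2}\mathbf z|^4\eta^4\le C\delta\int|\partial_{x_2}^2\mathbf z|^2\eta^4+\cdots$). The key mechanism is that the factor $\mathbf z-\mathbf z_0$ brings out $\delta$, and \emph{after} substituting the pointwise bound on $|\partial_{x_2}^2\mathbf z|^2$ back into this estimate, $\int|\partial_{x_2}\mathbf z|^4\eta^4$ reappears on the right with a small coefficient $C\delta$ and can then be absorbed into the \emph{same} quantity on the left -- not into $\int|D^2\mathbf z|^2$. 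One must also handle the boundary integral $\int_{\{x_2=\varepsilon\}}|\mathbf z-\mathbf z_0||D\mathbf z|^3\eta^4\,dx_1$ arising from the $\varepsilon$-cutoff, which the paper bounds by pushing a $\partial_{x_2}$ inside and letting $\varepsilon\to 0$. Your closing remark about ``carefully controlling the boundary terms'' gestures toward this, but without the $(\mathbf z-\mathbf z_0)|D\mathbf z|^2\eta^4$-type test functions the argument does not close.
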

\medskip\noindent
{\it Proof}.
For simplicity we assume that $\mathbf x_0=\bO=(0,0)$ and
\[
B_{2r} (\bO)\cap\Omega=B_{2r}^+ =\{(x_1,x_2)\colon |\mathbf x|<2r,\ x_2>0\},
\]
such that
\[
B_{2r}(\bO)\cap\partial\Omega=\{(x_1,0)\colon |x_1|<2r\}.
\]
The general problem reduces to this case by flattening the boundary.
This introduces an explicit $\mathbf x$--dependence for the coefficients in (A.1) that does not affect the estimates, however the constants $C_i$ appearing below  will also depend on $\Omega$.

Let $\eta\in C_c^\infty (B_{2r} (\bO))$ such that $\eta=1$ on $B_r(\bO)$.
Let $h\neq 0$ and set
\[
\Delta_1^h u(\mathbf x)={1\over h}[u(x_1+h,x_2)-u(x_1,x_2)].
\]
We multiply (A.1) by the test functions $-\Delta_1^{-h}[\eta^4\Delta_1^h(\mathbf z-\mathbf z_0)]$ and $(\mathbf z-\mathbf z_0)|\Delta_1^h (\mathbf z-\mathbf z_0)|^2\eta^4$.
Integrating over $B_{2r}^+$ in each case yields for $\delta\leq\delta_0$
\begin{eqnarray*}
&&\int_{B_{2r}^+}[|\Delta_1^h D\mathbf z|^2+|\Delta_1^h\mathbf z|^2 |D\mathbf z|^2]\eta^4 d\bx\\
&\leq&C_1\int_{B_{2r}^+}[1+ |D\mathbf z|^2 + |\Delta_1^h\mathbf z|^2]\eta^4 d\bx\\
&+&C_2\int_{B_{2r}^+} |\Delta_1^h (\mathbf z-\mathbf z_0)|^2 |D\eta|^2\eta^2 d\bx
\end{eqnarray*}
where the $C_j$ depend only on $\delta_0, \lambda_1,M$, and $\|\mathbf z_0\|_{C^2}$.
Letting $h\to 0$ gives
\begin{eqnarray}
\hskip1truein &&\int_{B_{2r}^+} [|\partial_{x_1} D\mathbf z|^2+|\partial_{x_1}\mathbf z|^4 \eta^4 d\bx\\
&\leq&C_3\int_{B_{2r}^+} [1+ |D\mathbf z|^2]\eta^4 d\bx \nonumber\\
&+&C_4\int_{B_{2r}^+} |\partial_{x_1}(\mathbf z-\mathbf z_0)|^2 |D\eta|^2 \eta^2 d\bx. \nonumber
\end{eqnarray}
We next estimate $|\partial^2_{x_2}\mathbf z|^2\eta^4$ and $|\partial_{x_2}\mathbf z|^4\eta^4$.
From (A.1) we have the pointwise bound
\begin{equation}
|\partial^2_{x_2}\mathbf z|^2\leq C_5 (|\partial_{x_2}\mathbf z|^4+|\partial_{x_1} D\mathbf z|^2+
|\partial_{x_1}\mathbf z|^4 +1).
\end{equation}
For $\var>0$ set $B_{\var,r}^+=B_r^+\cap \{x_2>\var\}$.
Multiplying (A.1) by $(\mathbf z-\mathbf z_0)|\partial_{x_2}\mathbf z|^2\eta^4$, integrating over $B_{\var,2r}^+$ with $\delta<\delta_0$ yields
\begin{eqnarray}
\hskip1truein &&\int_{B_{\var,2r}^+}|\partial_{x_2}\mathbf z|^4 \eta^4 d\bx\leq C_6\delta\int_{B_{\var,2r}^+}|\partial_{x_2}^2\mathbf z|^2 \eta^4 d\bx\\
&+&C_7\int_{\{x_2=\var\}} |\mathbf z-\mathbf z_0\| D\mathbf z|^3 \eta^4 dx_1 \nonumber \\
&+&C_8\int_{B_{\var,2r}^+} [|\partial_{x_1} D\mathbf z|^2 + |\partial_{x_1}\mathbf z|^4 +1]\eta^4 d\bx \nonumber\\
&+&C_9\int_{B_{\var,2r}^+} |\bold z-\bold z_0|^4 |D\eta|^4 d\bx.\nonumber
\end{eqnarray}
We next estimate
\begin{eqnarray*}
{\rm (A.5)}\hskip1truein &&\int_{\{x_2=\var\}} |D\mathbf z|^3 \eta^4 dx_1\leq\int_{B_{\var,2r}^+} |\partial_{x_2} (|D\mathbf z|^3 \eta^4)| d\bx\\
&\leq&C_{10}\int_{B_{\var,2r}^+} [(\partial_{x_2} D\mathbf z|^2 + |D\mathbf z|^4]\eta^4 d\bx\\
&+&C_{11} \int_{B_{\var,2r}^+} |D\eta|^4 d\bx.
\end{eqnarray*}
We use this to estimate the second term on the right side of (A.4).
This together with the integral of (A.3) times $\eta^4$ over $B_{{\var,2r}^+}$ and (A.2) for $\delta\leq\delta_0$ yields
\[
\int_{B_{\var,2r}^+} [|D^2\mathbf z|^2+ |D\mathbf z|^4]\eta^4 d\bx\leq E<\infty
\]
where $E$ is independent of $\var$.

From this point we proceed using the fact that $|D^2\mathbf z| +|D \mathbf z|^2\in L^2 (B_r^+)$ and replacing $\eta$ by $\tilde\eta\in C^\infty (B_{2\rho} (\bO))$ such that $\tilde\eta=1$ on $B_\rho (\bO)$ for $\rho\leq \frac{r}{2}$.
Since $\underset{\var\to0}\lim |\bold z(x_1,\var)-\bold z_0(x_1)|=0$ uniformly in $x_1$ we now have
 \[\underset{\var\to 0}\lim \int_{\{x_2=\var\} } |\mathbf z-\mathbf z_0| |D\bold z|^3 \tilde\eta^4 dx_1=0.\]
Using this, (A.2), (A.3), and (A.4) give
\begin{eqnarray*}
{\rm (A.6)}\hskip1truein &&\int_{B_{2\rho}^+} [|D^2\mathbf z|^2 + |D\mathbf z|^4]\tilde{\eta}^4 d\bx\\
&\leq& C_{12}\int_{B_{2\rho}^+} [1+ |D\mathbf z|^2] \tilde{\eta}^4 d\bx\\
&+& C_{13}\int_{B_{2\rho}^+} |\partial_{x_1} (\mathbf z-\mathbf z_0)|^2 |D\tilde{\eta}|^2 \tilde{\eta}^2 d\bx\\
&+&C_{14}\int_{B_{2\rho}^+} |\mathbf z -\mathbf z_0|^4 |D\tilde{\eta}|^4 d\bx.
\end{eqnarray*}
We can assume that $|D\tilde\eta|\leq {C\over\rho}$.
Since $\mathbf z-\mathbf z_0$ and $\partial_{x_1} (\mathbf z -\mathbf z_0)$ vanish for $x_2=0$ we can apply the Sobolev--Poincar\'e inequality to the last two terms.
We get
\[
\int_{B_{2\rho}^+} |\partial_{x_1} (\mathbf z-\mathbf z_0)|^2 d\bx\leq C_{15}(\int_{B_{2\rho}^+} |D^2(\mathbf z-\mathbf z_0)|d\bx)^2,
\]
and
\[
\rho^{-2}\int_{B_{2\rho}^+} |\mathbf z-\mathbf z_0|^4 d\bx\leq C_{16}(\int_{B_{2\rho}^+} |D(\mathbf z-\mathbf z_0)|^2 d\bx)^2.
\]
Inserting these estimates into (A.6) gives our result.\qed

\end{document}